\newcommand{\id}{\mathrm{id}}
\newcommand{\A}{\mathcal{A}}
\newcommand{\cH}{\mathcal{H}}
\newcommand{\C}{\mathcal{C}}
\newcommand{\B}{\mathcal{B}}
\newcommand{\cO}{\mathcal{O}}
\newcommand{\cP}{\mathcal{P}}
\renewcommand{\[}{\begin{equation}}
\renewcommand{\]}{\end{equation}}
\newtheorem{theorem}{Theorem}[section]
\newtheorem{lemma}[theorem]{Lemma}
\newtheorem{corollary}[theorem]{Corollary}
\theoremstyle{definition}
\newtheorem{definition}[theorem]{Definition}
\theoremstyle{remark}
\numberwithin{equation}{section}
\begin{document}
\parskip=0.75\baselineskip
\parindent=4mm

\author{Piotr~M.~Hajac}
\address{Instytut Matematyczny, Polska Akademia Nauk, ul.~\'Sniadeckich 8, 00--656 Warszawa, Poland}
\email{pmh@impan.pl}
\author{Tomasz Maszczyk}
\address{Instytut Matematyki, 
     Uniwersytet Warszawski,
ul.\ Banacha 2,
02--097 Warszawa, Poland}
\email{t.maszczyk@uw.edu.pl}
\title[Pulling back noncommutative bundles]{\vspace*{-25mm}\small
Pullbacks and nontriviality of\\
  associated noncommutative vector bundles}
\maketitle
\vspace*{-10mm}

\begin{abstract}\vspace*{-2.5mm}
Our main theorem is that the pullback of an associated noncommutative vector bundle induced by 
an equivariant map of quantum principal bundles is a noncommutative vector bundle associated 
via the same finite-dimensional representation of the structural quantum group. On the level of $K_{0}$-groups, 
we realize the induced map by the pullback of explicit matrix idempotents.  
We also show how to extend our result to the case when the quantum-group representation  is infinite dimensional, and then
 apply it to the Ehresmann-Schauenburg
quantum groupoid.
Finally, using noncommutative Milnor's join construction, we define quantum quaternionic projective spaces together with noncommutative
tautological quaternionic line bundles and their duals.
As a key application of the main theorem, we show that these bundles are stably non-trivial as noncommutative complex vector
bundles. 
\end{abstract}
\tableofcontents
\vspace*{-5mm}\noindent
Our result is motivated by the search of $K_0$-invariants. The main idea is to use equivariant homomorphisms to facilitate 
computations of such invariants by moving them from more complicated to simpler algebras. This strategy was recently successfully
applied in \cite{hpsz} to distinguish the $K_0$-classes of noncommutative
 line bundles over two different types of quantum complex projective spaces.
Herein we\footnote{\emph{Keywords:} K-theory of C*-algebras, free actions of compact quantum groups, 
Hopf algebras and faithful flatness, Chern-Galois character,
noncommutative join construction and Borsuk-Ulam-type conjecture, quantum quaternionic projective spaces.\\
\hspace*{6mm}\emph{AMS subject classification codes:} 46L80, 46L85, 58B32.} 
generalize from associated noncommutative line bundles to associated noncommutative 
\emph{vector} bundles. Then we apply our general theorem to 
noncommutative
 vector bundles associated with a finitely iterated 
equivariant noncommutative join \cite{dhh15} of 
$SU_q(2)$ with itself.

Recall that  the $n$-times iterated join $SU(2)*\dots*SU(2)$ yields the odd sphere $S^{4n+3}$, and the diagonal
$SU(2)$-action on the join  defines  a fibration producing the $n$-th quaternionic projective space: $S^{4n+3}/SU(2)=\mathbb{HP}^n$.
Therefore, we denote the $n$-iterated equivariant noncommutative join of $C(SU_q(2))$ with itself by $C(S^{4n+3}_q)$, and
consider the fixed-point subalgebra $C(S^{4n+3}_q)^{SU_q(2)}$ as the defining C*-algebra $C(\mathbb{HP}^n_q)$ of a
\emph{quantum quaternionic projective space}. Then we define the noncommutative tautological quaternionic line bundle and its dual
as noncommutative complex vector bundles associated through the contragredient representation of the fundamental represention of~$SU_q(2)$
and  the fundamental represention itself, respectively. We observe that, as in the classical case, they are isomorphic as noncommutative (complex) 
vector bundles. 
 
A classical argument, proving the non-triviality of a vector bundle associated with a principal bundle by restricting 
the vector bundle
 to an appropriate subspace, uses 
the fact that  the thus restricted vector bundle is associated with the  restricted principal bundle. 
The latter restriction is encoded by an equivariant map of total spaces of principal bundles, which induces a 
natural transformation of Chern characters 
of the vector bundles in question. 

More precisely,  let $X$ and $X'$ be compact Hausdorff right $G$-spaces. Assume that the \mbox{$G$-action} on $X$ is free. 
If $f:X'\to X$ is a continuous $G$-equivariant map,  the \mbox{$G$-action} on $X'$ is automatically free as well, and therefore  
we have then a $G$-equivariant homeomorphism of compact principal bundles over $X'/G$:
\[\label{equiv}
X'\ni x'\longmapsto \big(x'G,f(x')\big)\in X'/G\underset{X/G}{\times}X.
\]
Its inverse, given by means of the \emph{translation map} $\tau:  X\underset{X/G}{\times}X\to G$, $\tau(x, xg)=g$, is as follows:
\[
 X'/G\underset{X/G}{\times}X\ni \big(x'G, x\big)\longmapsto x'\tau\big(f(x'), x\big)\in X'.
\]
Here the fiber product over $X/G$  is given by the canonical quotient map $X\to X/G$, $x\mapsto xG$, and
by the $f$-induced map $f/G: X'/G\to X/G$, $xG\mapsto f(x)G$. 

Therefore,  $G$-equivariant continuous maps between total spaces of compact principal $G$-bundles are  equivalent to continuous maps between 
their base spaces. In particular, the isomorphism class of a compact principal $G$-bundle is uniquely determined by the homotopy class of a map 
from its base space to a compact approximation of the classifying space $BG$. As shown by Milnor~\cite{m-j56}, 
the quotient map $G*\dots*G\to (G*\dots*G)/G$ 
is a principal $G$-bundle which is a compact approximation of a universal principal $G$-bundle $EG\to BG$.    

To decide whether a given compact principal $G$-bundle is nontrivial, it is sufficient to prove that at least one of its associated vector bundles is 
nontrivial. 
Furthermore, every associated vector bundle is a pullback of a universal vector bundle, both corresponding to the same 
representation  $G\to GL(V)$. This is a 
consequence of the  compatibility  of associating and  pulling back
\[
X'\overset{G}{\times} V=\Big(X'/G\underset{X/G}{\times}X\Big)\overset{G}{\times} V
=X'/G\underset{X/G}{\times}\Big(X\overset{G}{\times} V\Big)
=(f/G)^*\Big(X\overset{G}{\times} V\Big),
\]
which is afforded by the $G$-equivariant homeomorphism (\ref{equiv}).

In particular, for $G=SU(2)$ (when $BG=\mathbb{HP}^\infty$), the restriction  of the 
tautological quaternionic line bundle $\tau_{{}_{\mathbb{HP}^n}}$ from $\mathbb{HP}^n$ to $\mathbb{HP}^1$ is the 
tautological quaternionic line bundle $\tau_{{}_{\mathbb{HP}^1}}$ over $\mathbb{HP}^1$, so that
the Chern character computation proving the nontriviality of  $\tau_{{}_{\mathbb{HP}^1}}$ proves also the nontriviality of
$\tau_{{}_{\mathbb{HP}^n}}$.

The present paper generalizes this reasoning to the noncommutative setting as follows. First, we use the Gelfand-Naimark theorem
to encode compact Hausdorff spaces as commutative unital C*-algebras. Next, we employ the Peter-Weyl theory to describe a compact group
as a Hopf algebra of representative functions. Then, we take advantage of the Peter-Weyl theory extended from compact groups to 
compact principal bundles \cite{bh14} to express a compact principal bundle as a comodule algebra over the Hopf algebra  
of representative functions. Finally, we use the Serre-Swan theorem to encode a vector bundle as a finitely generated projective module.
In particular, we describe an associated vector bundle as an associated finitely generated projective module \cite{bhms}
using the Milnor-Moore cotensor product~\cite{mm65}.
Having all these basic structures given in terms of commutative algebras, we generalize by dropping the assumption of commutativity.

In precise technical terms, we proceed as follows.
For any finite-dimensional corepresentation $V$ of a coalgebra $\mathcal{C}$
coacting principally on an algebra $\mathcal{A}$, we can use the cotensor product $\Box^\C$ to form an associated 
finitely generated projective module $\mathcal{A}\Box^\C V$ over the coaction-invariant subalgebra~$\mathcal{B}$. 
The module $\mathcal{A}\Box^\C V$ is the section module of the associated 
noncommutative vector bundle. 
If $\mathcal{A}'$ is an algebra with a principal coaction of~$\mathcal{C}$, and $\mathcal{B'}$ is its coaction-invariant subalgebra, then
any equivariant (colinear) algebra homomorphism $\mathcal{A}\to\mathcal{A}'$
restricts and corestricts to an algebra homomorphism $\mathcal{B}\to\mathcal{B}'$ making $\mathcal{B}'$ a 
$(\mathcal{B}'-\mathcal{B})$-bimodule. The theorem of the paper is:

\noindent{\bf Theorem 0.1.} \emph{The finitely generated
left $\mathcal{B}'$-modules $\mathcal{B}'\otimes_\mathcal{B}(\mathcal{A}\Box^\C  V)$ and $\mathcal{A}'\Box^\C  V$
are \emph{isomorphic}. 
In particular, for any equivariant *-homomorphism $f:A\to A'$ between unital C*-algebras
equipped with a free action of a compact quantum group, 
the induced K-theory map $f_*\colon K_0(B)\to K_0(B')$, where $B$ and $B'$ are the respective fixed-point subalgebras,
satisfies $f_*([A\Box^\C  V])=[A'\Box^\C  V]$. }

We begin by stating our main result in the standard and easily accessible Hopf-algebraic setting (Theorem~\ref{hopf}). Then we state 
and prove two slightly different 
 coalgebraic generalizations of the result: Theorem~\ref{faith} based on faithful flatness and coflatness (cosemisimple coalgebras but
with  corepresentations of any dimension), and Theorem~\ref{main} based on Chern-Galois 
theory~\cite{chg} (arbitrary coalgebras 
but with corepresentations of finite dimensions). 

An advantage of the faithful-flatness-and-coflatness approach is a possibility to apply it in the case of infinite-dimensional 
vector fibers. In particular, we prove (Theorem~\ref{twofaith}) 
that the pullback of the Ehresmann-Schauenburg quantum groupoid of a quantum principal bundle 
is  the  Ehresmann-Schauenburg quantum groupoid of the pullback of the quantum principal bundle.  

An advantage of the Chern-Galois approach is a possibility to compute explicitly a $K$-theoretic invariant whose non-vanishing 
for a given principal extension
proves
that  the extension is not cleft~\cite[\S 8.2]{cleft}. 
The aforementioned  coalgebraic generality is necessary  
already in the case of celebrated 
non-standard Podle\'s spheres, 
which are fundamental examples of noncommutative geometry going beyond the reach of Hopf-Galois 
theory~\cite{p-p87,brz96,brz97,bh99,sch-sch}. 

Finally, we use the Peter-Weyl functor  to make the result applicable to free actions of compact quantum groups 
on unital C*-algebras~\cite{free}.
In the C*-algebraic setting we consider our example and main application:
 the stable non-triviality of the noncommutative tautological quaternionic line bundles and their duals.

\section{Pushing forward modules associated with Galois-type coactions}

Let $\C$ be a coalgebra, $\delta_M\colon M\to M\otimes\C$ a right coaction, 
and ${}_N\delta\colon N\to \C\otimes N$ a left coaction. 
The \emph{cotensor} product of $M$ with $N$ is
$
M\Box^\C N:=\ker(\delta_M\otimes\id-\id\otimes {}_N\delta)
$. In what follows, we will also use the Heyneman-Sweedler notation (with the summation sign suppressed) for  
 comultiplications and right coactions:
\begin{equation}
\Delta(c)=:c_{(1)}\otimes c_{(2)},\quad\delta_M(m)=:m_{(0)}\otimes m_{(1)}\,.
\end{equation}

Next,
let $\cH$ be a Hopf algebra with bijective antipode $S$, comultiplication~$\Delta$, and counit~$\varepsilon$.
Also, let  $\delta_{\A}\colon\A\to\A\otimes\cH$ be a coaction rendering $\A$ a right $\cH$-comodule
algebra. The subalgebra of coaction invariants $\{b\in\A\;|\;\delta_{\A}(b)=b\otimes 1\}$ is called the coaction-invariant (or 
fixed-point) subalgebra.
We say that $\A$ is a \emph{principal} comodule algebra iff there exists a 
\emph{strong connection} \cite{h-pm96,dgh01,chg}, i.e., 
a unital linear map $\ell :\mathcal{H} \rightarrow \A \otimes\A$ satisfying:
\begin{enumerate}
\item[(1)] 
$(\mathrm{id}\otimes \delta_{\A}) \circ 
\ell = (\ell \otimes \mathrm{id}) \circ \Delta$,
$\big(((S^{-1}\otimes\id)\circ\mathrm{flip}\circ\delta_{\A}) \otimes \mathrm{id}\big) \circ 
\ell = (\mathrm{id} \otimes \ell) \circ
\Delta$;
\item[(2)] 
$m \circ \ell=\varepsilon$, where 
$m\colon \A\otimes\A\to \A$ is the multiplication map.
\end{enumerate}

Let $\cH$ be a Hopf algebra with bijective antipode. In \cite{piece}, the principality of an 
$\cH$-comodule algebra was defined by requiring the bijectivity of the canonical map
(see Definition~\ref{def.principal} (1)) and equivariant projectivity (see Definition~\ref{def.principal2} (2)).
One can  prove (see \cite{bh} and references therein) that
an $\cH$-comodule algebra is principal in this sense if and only if it admits a strong 
connection. Therefore, we will treat the existence of a strong connection 
as a condition defining the principality of a comodule algebra and avoid the original
definition of a principal comodule algebra. The latter is important when going beyond coactions that are
algebra homomorphisms --- then the existence of a strong connection is implied by
principality but we do not have the reverse implication~\cite{chg}.

\begin{theorem}\label{hopf}
Let $\A$ and $\A'$ be right $\cH$-comodule algebras for a Hopf algebra $\cH$ with bijective antipode, 
and $V$ be a finite-dimensional
left $\cH$-comodule. Denote by $\B$ and $\B'$
the respective coaction-invariant subalgebras. Assume that $\A$ is principal and that there
exists an $\cH$-equivariant algebra homomorphism $f\colon\A\to\A'$. The restriction-corestriction of $f$ to $\B\to\B'$ makes
$\B'$ a \mbox{$(\B'-\B)$-bi}module such that the associated left $\B'$-modules $\B'\otimes_B(\A\Box^\cH V)$ and $\A'\Box^\cH V$ are 
\emph{isomorphic}.
In particular, the induced map 
$$
f_*\colon K_0(\B)\longrightarrow K_0(\B')\quad
\text{satisfies}\quad f_*([\A\Box^\cH V])=[\A'\Box^\cH V].
$$
\end{theorem}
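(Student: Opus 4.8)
The plan is to produce the isomorphism explicitly and then read off the $K_0$-statement from it. First I would write down the natural candidate
$$\Phi\colon \B'\otimes_{\B}(\A\Box^{\cH}V)\longrightarrow \A'\Box^{\cH}V,\qquad b'\otimes\Big(\textstyle\sum_k a_k\otimes v_k\Big)\longmapsto \sum_k b'\,f(a_k)\otimes v_k,$$
which is the algebraic shadow of the classical comparison map $x'\mapsto(x'G,f(x'))$ in (\ref{equiv}). Here one checks three routine points: that $f\otimes\id_V$ maps $\A\Box^{\cH}V$ into $\A'\Box^{\cH}V$ (this is exactly $\cH$-equivariance of $f$), that left multiplication by $b'\in\B'$ preserves the cotensor product (because $\B'$ consists of coaction invariants and $\delta_{\A'}$ is multiplicative), and that $\Phi$ is balanced over $\B$ and left $\B'$-linear (because $f$ restricts to $\B\to\B'$). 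So $\Phi$ is a well-defined morphism of left $\B'$-modules, and the whole content of the theorem is its bijectivity.

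The engine for bijectivity is the strong connection. The key observation I would make is that if $\ell\colon\cH\to\A\otimes\A$ is a strong connection for $\A$, then $\ell':=(f\otimes f)\circ\ell$ is a strong connection for $\A'$: unitality and the normalization $m'\circ\ell'=\varepsilon$ (with $m'$ the multiplication of $\A'$) follow from $f$ being a unital algebra homomorphism, and the two colinearity conditions follow from the $\cH$-equivariance $\delta_{\A'}\circ f=(f\otimes\id)\circ\delta_{\A}$. In particular $\A'$ is automatically principal, the noncommutative counterpart of the classical fact that freeness of the $G$-action on $X$ forces freeness on $X'$. The strong connection $\ell$ is precisely the noncommutative translation map $\tau$ of the Introduction, $\ell'$ is its $f$-pushforward, and the inverse of $\Phi$ will be the algebraic form of the classical formula $(x'G,x)\mapsto x'\tau(f(x'),x)$.

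Concretely, I would invoke the Chern--Galois presentation \cite{chg,bhms}: after fixing a basis $\{e_i\}$ of $V$ with ${}_V\delta(e_j)=\sum_i t_{ij}\otimes e_i$, the strong connection $\ell$ yields an idempotent $E=E(\ell,t)\in M_n(\B)$, whose entries are obtained by evaluating $\ell$ on the matrix coefficients $t_{ij}$ (and their coproducts and antipodes) and multiplying the resulting two legs inside $\A$, together with a $\B$-linear isomorphism $\A\Box^{\cH}V\cong\mathrm{im}\,E$. Running the identical construction for $\A'$ with $\ell'$ produces $E'=E(\ell',t)\in M_n(\B')$ and an isomorphism $\A'\Box^{\cH}V\cong\mathrm{im}\,E'$. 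Because every entry of $E$ is a sum of products of legs of $\ell$, and $f$ is an algebra homomorphism intertwining $\ell$ with $\ell'$, applying $f$ entrywise gives at once
$$M_n(f)(E)=E'.$$
Here it is essential that $\A'\Box^{\cH}V$ does not depend on any choice of strong connection, so that we may compute its idempotent using precisely the pushforward $\ell'=(f\otimes f)\circ\ell$; this freedom is what makes the two idempotents correspond under $f$, and I expect verifying this compatibility (together with the left $\B'$-linearity of the Chern--Galois identifications) to be the main technical obstacle.

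Finally, extension of scalars along $f|_{\B}\colon\B\to\B'$ carries the direct summand $\mathrm{im}\,E\subseteq\B^{n}$ to the direct summand $\mathrm{im}\,M_n(f)(E)=\mathrm{im}\,E'\subseteq(\B')^{n}$, giving a left $\B'$-module isomorphism $\B'\otimes_{\B}(\A\Box^{\cH}V)\cong\A'\Box^{\cH}V$; a short diagram chase identifies this composite with $\Phi$, proving $\Phi$ is an isomorphism. The $K_0$-statement is then immediate, since $[\A\Box^{\cH}V]=[E]$ and $f_*[E]=[M_n(f)(E)]=[E']=[\A'\Box^{\cH}V]$, $f_*$ being induced by the base change $\B'\otimes_{\B}-$, which on idempotents is $M_n(f)$. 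For the C*-algebraic assertion I would pass through the Peter--Weyl functor \cite{free}, which presents a free action of a compact quantum group on a unital C*-algebra as a principal comodule algebra over the associated Hopf algebra and identifies $K_0$ of the fixed-point C*-algebra with the classes of these finitely generated projective modules, so that the algebraic identity transports verbatim.
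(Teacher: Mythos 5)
Your overall strategy coincides with the paper's: Theorem~\ref{hopf} is proved there as the Hopf-algebraic specialization of Theorem~\ref{main}, whose proof also pushes the strong connection forward along $f$ via $\ell':=(f\otimes f)\circ\ell$ (so that principality of $\A'$ comes for free), presents both associated modules by Chern--Galois idempotents from \cite{chg}, and compares the two idempotents through $f$. Your map $\Phi$, the three well-definedness checks, and the reduction of the $K_0$-claim to an identity of idempotent classes are all correct and consistent with the paper.

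The gap is the central claim that $M_n(f)(E)=E'$ holds ``at once.'' First, the idempotent of \cite[Theorem~3.1]{chg} is \emph{not} built from products of legs of $\ell$ alone: individual legs of a strong connection are not covariant (only the full tensors $\ell(c)$ satisfy the bicolinearity conditions), so a matrix of leg-products need not even have entries in $\B$. The entries that do lie in $\B$ are $e_{(\mu,i)(\nu,j)}=\sigma(r_\mu(c_{ij})a_\nu)$, and they depend on two auxiliary choices: a vector-space basis $\{a_\mu\}$ of $\A$ (through which $\ell(c)=\sum_\mu a_\mu\otimes r_\mu(c)$) and a unital functional $\varphi$ (through the left $\B$-linear projection $\sigma\colon\A\to\B$). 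Before any comparison you must make these choices compatibly; the paper takes $\varphi:=\varphi'\circ f$, which yields $f\circ\sigma=\sigma'\circ f$, and bases of $\A$ and $\A'$ adapted to the splitting of $\A$ into $\ker f$ and a complement. Second, even with these compatible choices, exact equality of idempotents fails whenever $\ker f\neq 0$ --- which is precisely the situation in the paper's application, where $f$ is built from evaluations and a character. What one actually gets is a block-triangular matrix of different size,
\begin{equation*}
f(e)=\begin{pmatrix} e' & 0\\ d & 0\end{pmatrix},
\end{equation*}
where the columns indexed by basis vectors of $\ker f$ vanish (since $f(a_\nu)=0$) but the corresponding \emph{rows} need not, because $f(r_\mu(c_{ij}))$ can be nonzero even when $a_\mu\in\ker f$; this produces the spurious block $d$. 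One still has to use idempotency of $f(e)$ to derive $de'=d$ and then conjugate by $\left(\begin{smallmatrix}1&0\\ d&1\end{smallmatrix}\right)$ to conclude $\B'^{N}f(e)\cong\B'^{N'}e'$ and $[f(e)]=[e']$. This compatible-choices-plus-conjugation argument is the actual content of the paper's proof, and it is exactly what your ``at once'' elides; note also that the cheaper faithful-flatness route of Theorem~\ref{faith} is not available here, since Theorem~\ref{hopf} does not assume $\cH$ cosemisimple.
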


As will be explained later on, the above Theorem~\ref{hopf} specializes Theorem~\ref{main},
and Theorem~\ref{cor} is a common denominator of Theorem~\ref{faith}  and
Theorem~\ref{hopf}.

\subsection{Faithfully flat coalgebra-Galois extensions}

\vspace*{-1.5mm}
\begin{definition}
\label{def.principal}\cite{bh}
Let $\mathcal{C}$ be a coalgebra coaugmented by  a group-like element $e\in \mathcal{C}$, and 
$\mathcal{A}$ an algebra and a right 
$\mathcal{C}$-comodule via
$\delta_{\mathcal{A}}:\mathcal{A}\rightarrow \mathcal{A}\otimes \mathcal{C}$ such that 
$\delta_{\mathcal{A}}(1)=1\otimes e$. 
Put
$$
\B:=\{b\in \mathcal{A}~|~\forall\,a\in\mathcal{A}\colon\delta_{\mathcal{A}} (ba)=b\delta_{\mathcal{A}} (a)\}
$$
 (coaction-invariant subalgebra). 
We say that the inclusion $\B\subseteq \mathcal{A}$ is an
 {\em e-coaugmented $\mathcal{C}$-Galois extension}
 iff
\begin{enumerate}
\item[(1)] the \emph{canonical map}
$
can: \mathcal{A}\otimes_\B\mathcal{A}{\rightarrow} \mathcal{A}\otimes 
\mathcal{A},\;a\otimes a'\mapsto a\delta_{\mathcal{A}} (a')
$
is bijective,
\item[(2)]  $\delta_{\mathcal{A}}(1)=1\otimes e$.
\end{enumerate}
\end{definition}

With any $\mathcal{C}$-Galois extension one can associate
the \emph{canonical entwining}~\cite{bh99}:
\begin{equation}
\psi:\mathcal{C}\otimes \mathcal{A}{\longrightarrow} \mathcal{A}\otimes \mathcal{C}, \quad c\otimes a\longmapsto
can(can^{-1}(1\otimes c)a).
\end{equation}
When the $\mathcal{C}$-Galois extension is $e$-coaugmented, then combining \cite[Proposition~2.2]{bm98} with 
\cite[Proposition~4.2]{bh99} yields the following presentations of the coaction-invariant subalgebra:
\begin{equation}\label{super}
\B=\{b\in \mathcal{A}~|~\delta_{\mathcal{A}} (b)=b\otimes e\}=\{b\in \mathcal{A}~|~\psi(e\otimes b)=b\otimes e\}.
\end{equation} 

\begin{theorem}\label{faith}
Let $\B\subseteq \mathcal{A}$ and $\B'\subseteq \mathcal{A}'$ be  
$e$-coaugmented $\mathcal{C}$-Galois extensions, let $V$ be a left 
$\mathcal{C}$-comodule. 
Assume that $\mathcal{A}'$ is faithfully flat as a right $\B'$-module and that coalgebra $\mathcal{C}$ is  cosemisimple. 
Then  every $\mathcal{C}$-equivariant algebra map \mbox{$f:\mathcal{A}\rightarrow \mathcal{A}'$} restricts and corestricts to an 
algebra homomorphism 
$\B\rightarrow\B'$, and induces an isomorphism 
\begin{equation*}
\B'{\otimes}_{\B}(\mathcal{A}\Box^{\mathcal{C}} V)\cong
 \mathcal{A'}\Box^{\mathcal{C}} V
\end{equation*}
of left $\B'$-modules that is natural in~$V$.  
\end{theorem}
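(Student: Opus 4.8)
The plan is to first dispatch the restriction--corestriction together with the construction of the comparison map, then reduce the asserted isomorphism to the single comodule $V=\C$, and finally settle that base case by a translation-map computation combined with faithful-flatness descent. For $b\in\B$ we have $\delta_{\A}(b)=b\otimes e$, so $\C$-equivariance of $f$ gives $\delta_{\A'}(f(b))=(f\otimes\id)(b\otimes e)=f(b)\otimes e$, whence $f(b)\in\B'$ by (\ref{super}); since $f$ is multiplicative and unital, $f|_{\B}\colon\B\to\B'$ is an algebra map, turning $\B'$ into a $(\B'\!-\!\B)$-bimodule. The $\C$-colinear algebra map $f$ induces, by functoriality of $\Box^{\C}$ in its left argument, a map $\A\Box^{\C}V\to\A'\Box^{\C}V$ that is left $\B$-linear when $\B$ acts on the target through $f|_{\B}$; extension of scalars then yields the natural left $\B'$-module map $\Phi_V\colon\B'\otimes_{\B}(\A\Box^{\C}V)\to\A'\Box^{\C}V$, $b'\otimes(a\otimes v)\mapsto b'(f(a)\otimes v)$, with naturality in $V$ automatic.

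\emph{Reduction to $V=\C$.} Both $V\mapsto\B'\otimes_{\B}(\A\Box^{\C}V)$ and $V\mapsto\A'\Box^{\C}V$ are additive functors, so $\Phi$ carries retracts to retracts and it suffices to check that $\Phi_V$ is an isomorphism for $V$ ranging over a class of comodules of which every left $\C$-comodule is a direct summand. Because $\C$ is cosemisimple, every comodule is injective, and the coassociativity of ${}_V\delta\colon V\to\C\otimes V$ exhibits it as a colinear mono into the cofree comodule $\C\otimes V$, which then splits colinearly; thus every $V$ is a direct summand of some $\C\otimes W$. Using the identification $M\Box^{\C}(\C\otimes W)\cong M\otimes W$ afforded by $\varepsilon$, the map $\Phi_{\C\otimes W}$ is identified with $\mu\otimes\id_W$, where $\mu\colon\B'\otimes_{\B}\A\to\A'$, $b'\otimes a\mapsto b'f(a)$. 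Hence everything reduces to showing that $\mu$ is an isomorphism --- the noncommutative avatar of the fibre-product description (\ref{equiv}).

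\emph{The base case $\B'\otimes_{\B}\A\cong\A'$.} Write $\mathrm{can}^{-1}(1\otimes c)=:c^{[1]}\otimes_{\B}c^{[2]}$ for the translation map of $\A$; left $\A$-linearity of $\mathrm{can}^{-1}$ yields the identity $a_{(0)}\,a_{(1)}{}^{[1]}\otimes_{\B}a_{(1)}{}^{[2]}=1\otimes_{\B}a$ in $\A\otimes_{\B}\A$. Consider the mixed canonical map $\beta\colon\A'\otimes_{\B}\A\to\A'\otimes\C$, $a'\otimes a\mapsto a'f(a_{(0)})\otimes a_{(1)}$. Its two-sided inverse is $a'\otimes c\mapsto a'f(c^{[1]})\otimes_{\B}c^{[2]}$: one direction uses $\mathrm{can}(c^{[1]}\otimes c^{[2]})=1\otimes c$ together with multiplicativity of $f$, the other uses the displayed translation identity. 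Equivariance of $f$ shows $\beta=\mathrm{can}'\circ(\id_{\A'}\otimes_{\B'}\mu)$ under the canonical identification $\A'\otimes_{\B'}(\B'\otimes_{\B}\A)\cong\A'\otimes_{\B}\A$, where $\mathrm{can}'$ is the bijective canonical map of $\B'\subseteq\A'$; since $\beta$ and $\mathrm{can}'$ are isomorphisms, so is $\id_{\A'}\otimes_{\B'}\mu$, and faithful flatness of $\A'$ over $\B'$ forces $\mu$ itself to be an isomorphism.

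\emph{Main obstacle.} The heart of the argument is this base case, i.e.\ recognizing that $\A'$ is the pullback $\B'\otimes_{\B}\A$ and proving it. The translation-map computation establishing that $\beta$ is bijective already holds at the coaugmented coalgebra-Galois level, where $\delta_{\A}$ need not be an algebra map, so the two standing hypotheses enter only at isolated points: cosemisimplicity is what permits the reduction to cofree comodules and thereby accommodates arbitrary-dimensional $V$, while faithful flatness of $\A'$ over $\B'$ is exactly what is needed to descend the isomorphism $\id_{\A'}\otimes_{\B'}\mu$ back to $\mu$. Checking that all the identifications above are natural in $V$, so that the resulting isomorphism $\B'\otimes_{\B}(\A\Box^{\C}V)\cong\A'\Box^{\C}V$ is genuinely a natural transformation, is then routine.
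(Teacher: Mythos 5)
Your proof is correct, and its heart coincides with the paper's: the map you call $\mu$ is precisely the paper's $\widetilde{f}=m_{\mathcal{A'}}\circ(\id_{\mathcal{B'}}\otimes_{\B} f)$, and you establish that it is an isomorphism the same way the paper does --- apply $\A'\otimes_{\B'}(-)$, identify the resulting map with one whose bijectivity comes from the canonical maps, and descend along the faithfully flat right $\B'$-module $\A'$. Your explicit translation-map inverse of $\beta$ is the hands-on version of the paper's commutative square built from $\id_{\A'}\otimes_{\A}\,can$, $can'$, and the isomorphism $\A'\otimes_{\A}\A\otimes\C\cong\A'\otimes\C$; in both cases the only inputs are the bijectivity of $can$ and $can'$. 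Where you genuinely diverge is the passage from this base case to arbitrary $V$. The paper applies $(-)\Box^{\C}V$ to the right-colinear isomorphism $\widetilde{f}$ and then commutes $\B'\otimes_{\B}(-)$ past $(-)\Box^{\C}V$, quoting from \cite{hopf} that cosemisimplicity implies injectivity, hence coflatness, of every comodule. You instead use cosemisimplicity (injectivity of $V$) to split the coaction $V\to\C\otimes V$ colinearly, so that every comodule is a retract of a cofree one, and then reduce by naturality of your transformation $\Phi$ to cofree $V$, where the counit identification $M\Box^{\C}(\C\otimes W)\cong M\otimes W$ collapses the statement to the base case. Your variant is more self-contained --- it replaces the cited tensor--cotensor commutation theorem by the retract trick --- and it makes naturality in $V$ automatic, since the globally defined $\Phi$ is shown to be objectwise invertible. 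What the paper's route buys is the intermediate isomorphism $\B'\otimes_{\B}(\A\Box^{\C}V)\cong(\B'\otimes_{\B}\A)\Box^{\C}V$ as a separate, reusable commutation statement (the same mechanism, in mirrored form, is what feeds the proof of Theorem~\ref{twofaith}), at the cost of leaning on the quoted results from \cite{hopf}.
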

\begin{proof}  
Since $\mathcal{A'}$ is a faithfully flat right $\B'$-module, the map of left $\B'$-modules right $\mathcal{C}$-comodules
\begin{equation}
\widetilde{f}:=m_{\mathcal{A'}}\circ(\id_{\mathcal{B'}}\otimes_{\B} f)\colon
\B'\otimes_{\B}\mathcal{A}\longrightarrow \mathcal{A'},\quad m_{\mathcal{A'}}(b'\otimes a')=b'a',
\end{equation}
is an isomorphism if and only if the map of left $\mathcal{A'}$-modules right $\mathcal{C}$-comodules
\begin{equation}
\id_{\mathcal{A'}}\otimes_{\B'}\widetilde{f}\colon
\mathcal{A'}\otimes_{\B'}\B'\otimes_{\B}\mathcal{A}\longrightarrow
\mathcal{A'}\otimes_{\B'}\mathcal{A'}
\end{equation}
is an isomorphism. Replacing the left-hand-side $\B'$ by  $\mathcal{A}$, the latter is an isomorphism if and only if
\begin{equation}
\id_{\mathcal{A'}}\otimes_{A}( f\otimes_{\B'} f)\colon
\mathcal{A'}\otimes_{\mathcal{A}}\mathcal{A}\otimes_{\B}\mathcal{A}\longrightarrow
\mathcal{A'}\otimes_{\B'}\mathcal{A'}
\end{equation}
is an isomorphism.  Thus, from the commutativity of the diagram
\begin{equation}
\xymatrixcolsep{5pc}\xymatrix{
\mathcal{A'}\otimes_{\mathcal{A}}\mathcal{A}\otimes_{\B}\mathcal{A}\quad 
\ar[d]_-{\id_{\mathcal{A'}}\otimes_{\mathcal{A}}\hspace{0.2em} can}
\ar[r]^-{\id_{\mathcal{A'}}\otimes_{\mathcal{A}}( f\otimes_{\B'} f)}&
\quad\mathcal{A'}\otimes_{\B'}\mathcal{A'}\ar[d]^{can'}\\
\mathcal{A'}\otimes_{\mathcal{A}}\mathcal{A}\otimes\mathcal{C} \ar[r]^-\cong &
\mathcal{A'}\otimes\mathcal{C}}
\end{equation}
and the bijectivity of the canonical maps, we infer that $\widetilde{f}$ is an isomorphism.

Furthermore, as $\widetilde{f}$ is a homomorphism of left $\B'$-modules 
right $\mathcal{C}$-comodules, we conclude that
\begin{equation}
\widetilde{f}\hspace{0.2em}\Box^{\mathcal{C}}\id_{V}: 
(\B'\otimes_{\B}\mathcal{A})\Box^{\mathcal{C}} V\longrightarrow
 \mathcal{A'}\Box^{\mathcal{C}} V
\end{equation}
is an isomorphism of left $\B'$-modules. Finally, since $\mathcal{C}$ is cosemisimple, and any comodule
over a cosemisimple coalgebra is injective \cite[Theorem 3.1.5 (iii)]{hopf}, whence coflat \cite[Theorem 2.4.17 
(i)-(iii)]{hopf}, the balanced tensor product $\B'\otimes_{\B}(-)$ and the cotensor product 
$(-)\Box^{\mathcal{C}}V$ commute. Therefore, there is a natural in $V$ isomorphism of left $\B'$-modules
\begin{equation}
\B'{\otimes}_{\B}(\mathcal{A}\Box^{\mathcal{C}} V)\cong 
(\B'{\otimes}_{\B}\mathcal{A})\Box^{\mathcal{C}} V\cong
 \mathcal{A'}\Box^{\mathcal{C}} V,
\end{equation}
as claimed.
\end{proof}

Note that the algebra $\mathcal{A}$ in the above theorem is given as a right $\mathcal{C}$-comodule. However, it
also enjoys a natural  left $\mathcal{C}$-comodule structure provided that
 the canonical entwining
is bijective. Indeed, one can then define a left coaction as follows:
\begin{equation}\label{leftco}
_\mathcal{A}\delta:\mathcal{A}\longrightarrow \mathcal{C}\otimes \mathcal{A},\quad
_\mathcal{A}\delta(a)=\psi^{-1}(a\otimes e).
\end{equation}
Another consequence of invertibility of the canonical entwining $\psi$ and \eqref{super} is the equality
  $\B=\{b\in\A\;|\;  _{\A}\delta(b)=e\otimes b\}$ and the fact that the left comultiplication $_{\A}\delta$ is  right $\B$-linear.

The left and right $\mathcal{C}$-comodule structures  on  $\mathcal{A}$, together with the left  $\B$-linearity of 
$\delta_\mathcal{A}$ and the right $\B$-linearity of $_\mathcal{A}\delta$, allow us to construct a  
$\mathcal{B}$-bimodule  $\mathcal{A}\Box^{ \mathcal{C}} \mathcal{A}$. 
 In the Hopf-Galois case, it is the Ehresmann-Schauenburg quantum groupoid~\cite{s-p96}, 
which is a noncommutative generalization of the  Ehresmann  groupoid of a principal bundle~\cite{p-j84}. 

Now we want to apply Theorem~\ref{faith} to $\mathcal{A}\Box^{ \mathcal{C}} \mathcal{A}$ with the right 
$\mathcal{A}$ viewed as a left \mbox{$\mathcal{C}$-comodule}. To this end, we need the following:
\begin{lemma}\label{cool}
Let $\A$ and $\A'$ be  $e$-coaugmented $\C$-Galois extensions with invertible canonical entwinings. Then, if  
an algebra map $f\colon\A\to\A'$ is  right $\C$-colinear, it is also left   $\C$-colinear.
\end{lemma}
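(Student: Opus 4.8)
The plan is to show that a right $\C$-colinear algebra homomorphism automatically intertwines the two canonical entwinings, and then to read off left colinearity directly from the defining formula \eqref{leftco} for the left coaction.

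First I would record that $f$ maps $\B$ into $\B'$: if $b\in\B$, then $\delta_\A(b)=b\otimes e$ by \eqref{super}, so right colinearity gives $\delta_{\A'}(f(b))=(f\otimes\id_\C)(b\otimes e)=f(b)\otimes e$, whence $f(b)\in\B'$, again by \eqref{super}. Because $f(\B)\subseteq\B'$ and $f$ is an algebra map, $f\otimes f$ descends to a well-defined map $\A\otimes_\B\A\to\A'\otimes_{\B'}\A'$. A direct computation, using that $f$ is simultaneously multiplicative and right $\C$-colinear, then shows that the square
\begin{equation*}
can'\circ(f\otimes f)=(f\otimes\id_\C)\circ can
\end{equation*}
commutes, since both composites send $a\otimes a'$ to $f(aa'_{(0)})\otimes a'_{(1)}$.

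Next I would transport the inverse canonical map across $f$. Applying the commuting square to $can^{-1}(1\otimes c)$ and using $f(1)=1$ to get $(f\otimes\id_\C)(1\otimes c)=1\otimes c$, the bijectivity of $can$ and $can'$ yields
\begin{equation*}
(can')^{-1}(1\otimes c)=(f\otimes f)\big(can^{-1}(1\otimes c)\big)\qquad\text{for all }c\in\C.
\end{equation*}
Substituting this into the definition $\psi(c\otimes a)=can\big(can^{-1}(1\otimes c)\,a\big)$ and its primed analogue, and once more invoking multiplicativity and right colinearity of $f$, a short calculation produces the entwining intertwining relation
\begin{equation*}
(f\otimes\id_\C)\circ\psi=\psi'\circ(\id_\C\otimes f).
\end{equation*}

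Finally, since $\psi$ and $\psi'$ are invertible, composing the last identity with $\psi^{-1}$ on the right and $(\psi')^{-1}$ on the left gives $(\id_\C\otimes f)\circ\psi^{-1}=(\psi')^{-1}\circ(f\otimes\id_\C)$. Evaluating both sides on $a\otimes e$ and recalling from \eqref{leftco} that ${}_\A\delta(a)=\psi^{-1}(a\otimes e)$ yields $(\id_\C\otimes f)\big({}_\A\delta(a)\big)={}_{\A'}\delta(f(a))$, which is precisely the left $\C$-colinearity of $f$. The conceptual content sits entirely in the commuting square for the canonical maps and the resulting identity for $can^{-1}$; the main obstacle I anticipate is purely bookkeeping, namely keeping the two balanced tensor products $\otimes_\B$ and $\otimes_{\B'}$ apart when descending $f\otimes f$ and checking at each step that colinearity is preserved so that the entwining formula may be applied.
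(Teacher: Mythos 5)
Your proposal is correct and follows essentially the same route as the paper's proof: intertwining the canonical maps, transporting that relation to their inverses, deducing $(f\otimes\id_\C)\circ\psi=\psi'\circ(\id_\C\otimes f)$, inverting the entwinings, and evaluating at $a\otimes e$ via \eqref{leftco}. The only difference is that you explicitly check $f(\B)\subseteq\B'$ so that $f\otimes f$ descends to the balanced tensor products, a point the paper leaves implicit when writing $f\otimes_\B f$.
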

\begin{proof}
If  $f\colon\A\to\A'$ is a $\C$-colinear algebra homomorphism, 
then it intertwines the canonical maps $can$ and $can'$ of $\A$ and $\A'$
 respectively  in the following way:
\begin{equation}
(f\otimes \id_{\C})\circ can=can'\circ (f\otimes_{\B} f).
\end{equation}
Since both $can$ and $can'$ are invertible, this implies that
\begin{equation}
(can')^{-1}\circ (f\otimes \id_{\C})=(f\otimes_{\B} f)\circ can^{-1}.
\end{equation}
Therefore, canonical entwinings $\psi$ and $\psi'$ are related as follows:
\begin{align}
\big((f\otimes \id_{\C})\circ \psi\big)(c\otimes a) 
            &= (f\otimes \id_{\C})\Big( can\big( (can^{-1}(1\otimes c)a)\big)\Big) \nonumber\\
            &= \big(can'\circ (f\otimes_{\B}f)\big)\big(can^{-1}(1\otimes c)a\big)\nonumber\\
            &= can'\big( (f\otimes_{\B}f)\big(can^{-1}(1\otimes c)a\big)\big)\nonumber\\
            &= can'\big( (f\otimes_{\B}f)\big(can^{-1}(1\otimes c)\big)f(a)\big)\nonumber\\
            &= can'\Big( \big((can')^{-1}(f(1)\otimes c)\big)f(a)\Big)\nonumber\\
            &= can'\Big( \big((can')^{-1}(1\otimes c)\big)f(a)\Big)\nonumber\\
            &= \psi'(c\otimes f(a))\nonumber\\
            &= \big(\psi'\circ (\id_{\C}\otimes f)\big)(c\otimes a).
\end{align}
As $c$ and $a$ are arbitrary, we conclude that
\begin{equation}
(f\otimes \id_{\C})\circ \psi=\psi'\circ (\id_{\C}\otimes f).
\end{equation}
Now it follows from the invertibility of the entwinings that
\begin{equation}
(\psi')^{-1}\circ (f\otimes \id_{\C})=(\id_{\C}\otimes f)\circ \psi^{-1}.
\end{equation}
Finally, evaluating the above equation on $a\otimes e$, we get
\begin{equation}
\big((\psi')^{-1}\circ (f\otimes \id_{\C})\big)(a\otimes e)=\big((\id_{\C}\otimes f)
\circ \psi^{-1}\big)(a\otimes e),
\end{equation}
which reads
\begin{equation}
\big(_{\mathcal{A}'}\delta\circ f\big)(a)=\big((\id_{\C}\otimes f)\circ _\A\!\delta\big)(a).
\end{equation}
Since $a$ is arbitrary, we infer the desired left $\C$-colinearity of~$f$.
\end{proof}

Note that in the Hopf-algebraic setting of comodule algebras, the invertibility of the canonical entwining $\psi$
is equivalent to the bijectivity of the antipode $S$. Then the left-coaction formula \eqref{leftco} reads
$_\mathcal{A}\delta=(S^{-1}\otimes\id)\circ\mathrm{flip}\circ\delta_\A$, and the above lemma is trivially true. 

The following theorem generalizes the fact that the pullback of the Ehresmann groupoid of a principal bundle 
is the Ehresmann 
groupoid of the pullback principal bundle.
\begin{theorem}\label{twofaith}
Let $\B\subseteq \mathcal{A}$ and $\B'\subseteq \mathcal{A}'$ be  $e$-coaugmented $\mathcal{C}$-Galois extensions  with 
bijective canonical 
entwinings. Assume that $\mathcal{A}'$ is faithfully flat as a left and  right $\B'$-module, and that the
coalgebra $\mathcal{C}$ is  cosemisimple. Then  every $\mathcal{C}$-equivariant algebra map 
$f:\mathcal{A}\rightarrow \mathcal{A}'$ restricts and 
corestricts to an algebra homomorphism $\B\rightarrow\B'$, and induces an isomorphism of  $\B'$-bimodules
$$
\B'{\otimes}_{\B}(\mathcal{A}\Box^{\mathcal{C}}\mathcal{A}){\otimes}_{\B}\B'\cong
 \mathcal{A'}\Box^{\mathcal{C}} \mathcal{A'}.
$$
\end{theorem}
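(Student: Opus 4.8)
The plan is to apply Theorem~\ref{faith} twice, once to each of the two tensor factors of the Ehresmann--Schauenburg cotensor product $\A\Box^\C\A$. Here the left-hand copy of $\A$ carries its right $\C$-comodule structure $\delta_\A$ and the right-hand copy carries the left $\C$-comodule structure ${}_\A\delta$ of~\eqref{leftco}, available precisely because the canonical entwining is bijective, and making $\A\Box^\C\A$ a $\B$-bimodule. Before applying the theorem I would note that $f$ restricts and corestricts to $\B\to\B'$ exactly as in Theorem~\ref{faith}, and that, by Lemma~\ref{cool}, the right $\C$-colinear algebra map $f$ is automatically left $\C$-colinear.

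For the left-hand factor I would apply Theorem~\ref{faith} with the (in general infinite-dimensional) left $\C$-comodule $V:=\A$, obtaining a left $\B'$-module isomorphism $\B'\otimes_\B(\A\Box^\C\A)\cong\A'\Box^\C\A$. Since the isomorphism supplied by Theorem~\ref{faith} is built from $\widetilde f\,\Box^\C\id_\A$ and the coflatness identification, and neither disturbs the right $\B$-action borne by the untouched right-hand copy of $\A$ (this action being $\C$-colinear because ${}_\A\delta$ is right $\B$-linear), it is in fact an isomorphism of $(\B'\!-\!\B)$-bimodules.

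For the right-hand factor I would appeal to the left--right mirror of Theorem~\ref{faith}, most cleanly obtained by applying that theorem verbatim to the opposite algebras $\A^{\mathrm{op}},(\A')^{\mathrm{op}}$ over the co-opposite coalgebra $\C^{\mathrm{cop}}$. Under this passage the left coaction ${}_\A\delta$ becomes a right $\C^{\mathrm{cop}}$-coaction, faithful flatness of $\A'$ as a left $\B'$-module becomes faithful flatness of $(\A')^{\mathrm{op}}$ as a right $(\B')^{\mathrm{op}}$-module, and the left $\C$-colinearity of $f$ granted by Lemma~\ref{cool} becomes the right $\C^{\mathrm{cop}}$-colinearity the theorem requires. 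Taking the fixed comodule to be the left-hand copy $\A'$ then yields a right $\B'$-module isomorphism $(\A'\Box^\C\A)\otimes_\B\B'\cong\A'\Box^\C\A'$, which is left $\B'$-linear because the left $\B'$-action on the unchanged left-hand factor $\A'$ is preserved. Tensoring the first isomorphism on the right by $\B'$ over $\B$ and composing with this one produces the claimed $\B'$-bimodule isomorphism $\B'\otimes_\B(\A\Box^\C\A)\otimes_\B\B'\cong\A'\Box^\C\A'$.

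The step I expect to be the main obstacle is verifying that the opposite/co-opposite data genuinely satisfy the hypotheses of Theorem~\ref{faith}: that the left coaction~\eqref{leftco} turns $\A$ and $\A'$ into $e$-coaugmented $\C^{\mathrm{cop}}$-Galois extensions with bijective canonical entwining, so that the mirrored canonical maps driving the diagram chase are invertible and are intertwined by $f$. Once this is secured, the remaining work --- threading the four one-sided module structures through both isomorphisms to confirm that the composite is a genuine morphism of $\B'$-bimodules --- is routine bookkeeping.
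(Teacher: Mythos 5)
Your proposal is correct and follows essentially the same route as the paper: apply Theorem~\ref{faith} with $V=\A$ to obtain $\B'\otimes_\B(\A\Box^\C\A)\cong\A'\Box^\C\A$, then apply a left--right mirrored version of Theorem~\ref{faith} (made available by Lemma~\ref{cool}) with $V=\A'$ to obtain $(\A'\Box^\C\A)\otimes_\B\B'\cong\A'\Box^\C\A'$, and compose using associativity of the tensor product. Your opposite-algebra/co-opposite-coalgebra passage is simply a precise formalization of what the paper asserts as the ``reversed version'' of Theorem~\ref{faith}, including the verification (which the paper leaves implicit) that the left coaction \eqref{leftco} yields the mirrored Galois data.
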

\begin{proof}
Observe first that thanks to Lemma~\ref{cool}, Theorem \ref{faith} admits
the left $\mathcal{C}$-colinear right $\mathcal{B}$-linear (reversed) version. Now, 
using the associativity of the tensor product of $\B'$-bimodules, Theorem~\ref{faith} applied to $V=\mathcal{A}$, and the
reversed version of Theorem~\ref{faith} applied to $V=\mathcal{A}'$, we compute:
\begin{equation*}
\B'{\otimes}_{\B}(\mathcal{A}\Box^{\mathcal{C}}\mathcal{A}){\otimes}_{\B}\B'
            \cong(\B'{\otimes}_{\B}(\mathcal{A}\Box^{\mathcal{C}}\mathcal{A})){\otimes}_{\B}\B' 
            \cong (\mathcal{A'}\Box^{\mathcal{C}}\mathcal{A}){\otimes}_{\B}\B'
            \cong
 \mathcal{A'}\Box^{\mathcal{C}} \mathcal{A'}.\qedhere
\end{equation*}
\end{proof}

\subsection{Principal coactions}

\begin{definition}
\label{def.principal2}\cite{chg}
Let $\B\subseteq \mathcal{A}$ be an $e$-coaugmented $\mathcal{C}$-Galois extension. 
We call such an extension a
{\em principal $\mathcal{C}$-extension}
 iff
\begin{enumerate}
\vspace*{-.5mm}\item[(1)] $\psi:\mathcal{C}\otimes \mathcal{A}{\rightarrow} \mathcal{A}
\otimes \mathcal{C}$, $c\otimes a\mapsto
can(can^{-1}(1\otimes c)a)$ is bijective (invertibility of the \emph{canonical entwining}),
\item[(2)]
there exists a left $\B$-linear right $\C$-colinear splitting of the multiplication map  $\B\otimes \mathcal{A}\to \mathcal{A}$
(\emph{equivariant projectivity}). 
\end{enumerate}
\end{definition} 

Next, let us consider $\mathcal{A}\otimes\mathcal{A}$
as a $\mathcal{C}$-bicomodule via the right coaction
$
{\rm id}\otimes\delta_{\mathcal{A}}$ and the left coaction $
_{\mathcal{A}}\delta\otimes {\rm id}$, and $\mathcal{C}$ as a $\mathcal{C}$-bicomodule 
via its comultiplication.
\begin{definition}\label{str}
 A \emph{strong connection} is a $\mathcal{C}$-bicolinear map
$\ell:\mathcal{C}\rightarrow \mathcal{A}\otimes\mathcal{A}$ such that $\ell(e)=1\otimes 1$ and
$m\circ\ell=\varepsilon$, where $m$ and $\varepsilon$ stand for the multiplication in $\mathcal{A}$ 
and the counit of $\mathcal{C}$, respectively.
\end{definition}
\noindent
It is clear that the above definition of a strong connection coincides with its Hopf-algebraic 
counterpart by choosing $e=1$ (see the beginning of
Section~1).

\begin{theorem}\label{main}
Let $\B\subseteq \mathcal{A}$ and $\B'\subseteq \mathcal{A}'$ be principal $\mathcal{C}$-extensions, 
and $V$ a finite-dimensional
left $\C$-comodule.
 Then  every $\mathcal{C}$-equivariant  algebra map $f\colon\mathcal{A}\rightarrow \mathcal{A}'$ restricts 
and corestricts to an algebra
homomorphism $\B\to\B'$, and induces an
 isomorphism of  finitely generated projective left $\mathcal{\B}'$-modules 
$$
\B'{\otimes}_{\B}(\mathcal{A}\Box^{\mathcal{C}} V)\cong
 \mathcal{A'}\Box^{\mathcal{C}} V
$$
that is natural in~$V$. 
In particular, the induced map $f_*\colon K_0(\B)\to K_0(\B')$
satisfies 
$$
f_*([\A\Box^\C V])=[\A'\Box^\C V].
$$
\end{theorem}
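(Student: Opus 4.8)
The plan is to bypass the faithful-flatness/coflatness machinery of Theorem~\ref{faith} and instead exploit the strong connection directly, producing the isomorphism at the level of explicit Chern--Galois idempotents (as promised in the abstract). First I would record the easy structural facts. Since $f$ is a unital $\C$-colinear algebra map and, by \eqref{super}, $\B=\{b\in\A\mid\delta_{\A}(b)=b\otimes e\}$ (and likewise for $\B'$), the map $f$ carries $\B$ into $\B'$ and restricts to a unital algebra homomorphism $\B\to\B'$, turning $\B'$ into a $(\B'-\B)$-bimodule. Because both extensions are principal, their canonical entwinings are bijective by Definition~\ref{def.principal2}(1), so Lemma~\ref{cool} applies and $f$ is automatically \emph{left} $\C$-colinear as well.

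The key move is to push a strong connection forward. Fixing a strong connection $\ell\colon\C\to\A\otimes\A$ on $\A$ (which exists by principality \cite{chg}), I would set $\ell':=(f\otimes f)\circ\ell\colon\C\to\A'\otimes\A'$ and verify that $\ell'$ is a strong connection on $\A'$: bicolinearity follows from the two-sided colinearity of $f$ (right by hypothesis, left by Lemma~\ref{cool}); $\ell'(e)=f(1)\otimes f(1)=1\otimes1$ by unitality; and $m_{\A'}\circ\ell'=f\circ m_{\A}\circ\ell$ sends $c$ to $f(\varepsilon(c)1_{\A})=\varepsilon(c)1_{\A'}$ since $f$ intertwines the multiplications. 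Thus $\ell'$ is literally the $f$-image of $\ell$, so that every ingredient used to build the Chern--Galois idempotent of $\A'$ from $\ell'$ is the $f$-image of the corresponding ingredient for $\A$.

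Now I would invoke Chern--Galois theory \cite{chg}: for a principal $\C$-extension, a strong connection $\ell$, and a finite-dimensional left $\C$-comodule $V$ with basis $\{v_i\}$ and structure coefficients ${}_V\delta(v_i)=\sum_j c_{ji}\otimes v_j$, the associated module $\A\Box^{\C}V$ is finitely generated projective and isomorphic to the left $\B$-module $\B^{\,n}\mathbf{E}$ cut out by an idempotent $\mathbf{E}=\mathbf{E}(\ell,V)\in M_n(\B)$ whose entries are obtained by evaluating $\ell$ on the fixed coefficients $c_{ji}\in\C$ and multiplying in $\A$. Since the $c_{ji}$ do not depend on the algebra and $f$ is an algebra map, applying $f$ entrywise gives $\mathbf{E}(\ell',V)=f(\mathbf{E}(\ell,V))$. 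As the isomorphism class of the associated module does not depend on the chosen strong connection, computing $\A'\Box^{\C}V$ with $\ell'$ yields $\A'\Box^{\C}V\cong(\B')^{\,n}f(\mathbf{E})$. Finally, base change along $f|_{\B}\colon\B\to\B'$ sends the image of an idempotent to the image of its $f$-image, so $\B'\otimes_{\B}(\A\Box^{\C}V)\cong\B'\otimes_{\B}(\B^{\,n}\mathbf{E})\cong(\B')^{\,n}f(\mathbf{E})\cong\A'\Box^{\C}V$, as required. On $K_0$ this reads $f_*[\A\Box^{\C}V]=f_*[\mathbf{E}]=[f(\mathbf{E})]=[\A'\Box^{\C}V]$, and naturality in $V$ is inherited from the functoriality of both the cotensor product and the idempotent assignment in the comodule $V$.

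The step I expect to be the main obstacle is the identity $\mathbf{E}(\ell',V)=f(\mathbf{E}(\ell,V))$ together with the verification that $\ell'$ is a genuine strong connection. Everything rests on $\ell'=(f\otimes f)\circ\ell$ being $\C$-bicolinear, and here the left $\C$-colinearity supplied by Lemma~\ref{cool}---hence the bijectivity of the canonical entwining built into principality---is essential rather than cosmetic: without it $\ell'$ would not respect the left coaction on $\A'\otimes\A'$ and the Chern--Galois idempotent would fail to be defined. Tracking the explicit entry formula for $\mathbf{E}$ through the homomorphism $f$ is then a careful but routine bookkeeping needed to confirm the entrywise compatibility, after which the module isomorphism and the $K_0$-identity follow formally.
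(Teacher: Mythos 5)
Your overall strategy coincides with the paper's own proof: push the strong connection forward along $f$ via $\ell'=(f\otimes f)\circ\ell$ (using Lemma~\ref{cool} for bicolinearity) and compare the resulting Chern--Galois idempotents; your verification that $\ell'$ is a strong connection is correct. However, the step you set aside as ``careful but routine bookkeeping'' --- the identity $\mathbf{E}(\ell',V)=f(\mathbf{E}(\ell,V))$ --- is where the real content lies, and as stated it is false. The idempotent of \cite[Theorem~3.1]{chg} does not depend only on $\ell$ and the coefficients $c_{ij}$: its entries are $e_{(\mu,i)(\nu,j)}=\sigma(r_{\mu}(c_{ij})a_{\nu})$, where the $r_{\mu}$ are the expansion coefficients of $\ell$ with respect to a chosen \emph{basis} $\{a_{\mu}\}$ of $\A$, and $\sigma$ is built from a chosen unital functional $\varphi$ on $\A$. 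So ``the $c_{ji}$ do not depend on the algebra'' is not enough; you must correlate these auxiliary choices across $f$. The paper does this by taking $\varphi:=\varphi'\circ f$ (which gives $\sigma'\circ f=f\circ\sigma$) and, crucially, by choosing a basis of $\A$ adapted to $\ker f$, so that $\{f(a_{\mu})\}_{\mu\in I}$ is a basis of $f(\A)$ while $\{a_{\mu}\}_{\mu\notin I}$ spans $\ker f$.

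Even with these compatible choices, your identity cannot hold when $f$ is not injective: $f(\mathbf{E})$ is an $N\times N$ matrix, whereas the idempotent $e'$ computed from $\ell'$ and a basis of $\A'$ has smaller size (only basis vectors lying in $f(\A)$ occur in the expansion of $\ell'$), and the rows of $f(\mathbf{E})$ indexed by $\mu\notin I$ need not vanish, because $f(a_{\mu})=0$ does not force $f(r_{\mu}(c_{ij}))=0$. What is true is the block relation
\begin{equation*}
f(e)=\begin{pmatrix} e' & 0\\ d & 0\end{pmatrix},
\end{equation*}
with $d$ a priori nonzero, and one needs a further argument --- the paper uses idempotency of $f(e)$ to get $de'=d$ and then conjugates by $\left(\begin{smallmatrix}1&0\\ d&1\end{smallmatrix}\right)$ --- to conclude $\B'^{N}f(e)\cong\B'^{N'}e'$ and hence $[f(e)]=[e']$ in $K_{0}(\B')$. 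Note also that you cannot close this gap by appealing to independence of the associated module from the choice of strong connection: $f(\mathbf{E})$ is \emph{not} a Chern--Galois idempotent of $\A'$ in the sense of \cite[Theorem~3.1]{chg}, since the family $\{f(a_{\mu})\}_{\mu}$ is not a basis of anything once $f$ has a kernel. So the last link in your chain $\B'\otimes_{\B}(\A\Box^{\C}V)\cong(\B')^{N}f(\mathbf{E})\cong\A'\Box^{\C}V$ is precisely what remains to be proved, and the adapted basis, the pulled-back functional, and the block-matrix conjugation are the missing ingredients rather than cosmetic details.
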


\begin{proof}
Note first that combining \cite[Lemma~2.2]{chg} with \cite[Lemma~2.3]{chg}
 implies that a principal $\C$-extension always admits
a strong connection:
\begin{equation}\label{strongell}
\ell:
\mathcal{C}\longrightarrow \mathcal{A}\otimes \mathcal{A},\quad 
\sum_{\mu}a_{\mu}\otimes r_{\mu}(c):=\ell(c)=:
\ell\left( c\right)^{\langle  1\rangle}\otimes  \ell\left(c\right)^{\langle  2\rangle}
\end{equation}
(summation suppressed), where $\{a_\mu\}_\mu$ is a basis of~$\A$.
Given a unital linear functional $\varphi: \mathcal{A}\rightarrow \mathbb{C}$, one can construct \cite{piece} a left $\B$-linear 
map  $\sigma: \mathcal{A}\rightarrow \B$,
\begin{align}\label{sigma}
\sigma(a):= a_{(0)}\ell\left( a_{(1)} \right)^{\langle  1\rangle}\varphi\left( \ell \left(a_{(1)}\right)^{\langle  2\rangle}\right),
\end{align}
such that  $\sigma(b)=b$ for all $b\in \B$.
For a finite-dimensional left $\mathcal{C}$-comodule $V$ with a basis $\{v_i\}_i$, 
we define the coefficient matrix of the coaction 
$\varrho\colon V\rightarrow \mathcal{C}\otimes V$ with
respect to $\{v_i\}_i$ by
$
 \varrho(v_{i})=: \sum_{j}c_{ij}\otimes v_{j}
$.
By \cite[Theorem~3.1]{chg}, we can now apply 
\eqref{strongell} and \eqref{sigma} to the $c_{ij}$ to obtain a finite-size (say $N$) 
idempotent  matrix $e$ with entries
\begin{equation}
e_{(\mu, i) (\nu, j)}:=\sigma(r_{\mu}(c_{ij})a_{\nu})\in \B
\end{equation}
such that $\mathcal{A}\Box^{\mathcal{C}}V\cong\B^Ne$ as left $\B$-modules.
Consequently, $\mathcal{A}\Box^{\mathcal{C}}V$ is  finitely generated projective, and its class in $K_{0}(\B)$ can be 
represented by~$e$. 

Since $f: \mathcal{A}\rightarrow \mathcal{A}'$ satisfies the assumptions of Lemma~\ref{cool},
\begin{equation}\label{elp}
\ell':=(f\otimes f)\circ\ell : \mathcal{C}\longrightarrow\mathcal{A}'\otimes \mathcal{A}'
\end{equation}
is a strong connection on $\mathcal{A}'$.
Next,
we choose
bases $\{ a_{\mu}\mid \mu\in J\}$ and $\{ a'_{\mu}\mid \mu\in J'\}$ of $\mathcal{A}$ and $\mathcal{A}'$ 
respectively  in such a way that 
\begin{equation*}
\{ a'_{\mu}=f(a_{\mu})\, \mid\,\mu\in I \} \text{ is a basis of }f(\mathcal{A}) \quad \text{and}\quad 
\{a_{\mu}\,\mid\, \mu\not\in I\}\text{ is a basis of }\ker f.
\end{equation*}
Under the above choices, using \eqref{strongell} and \eqref{elp}, we compute
\begin{equation}
\sum_{\mu}a'_{\mu}\otimes r'_{\mu}(c):=\ell'(c)
=\sum_{\mu}f(a_{\mu})\otimes f(r_{\mu}(c))=\sum_{\mu}a'_{\mu}\otimes f(r_{\mu}(c)).
\end{equation}
Thus we obtain 
$
r'_{\mu}(h)= f(r_{\mu}(h)) \text{ for all }\mu\in I
$.

Now we choose a unital functional $\varphi'$ on $\mathcal{A}'$, and take  $\varphi:=\varphi'\circ f$. 
For $\sigma'$ produced 
from $\varphi'$ and 
$\ell'$ as in (\ref{sigma}), we check that the diagram
\begin{equation}
\xymatrix @C=1.5pc @R=0.5pc{                    & \mathbb{C} &           \\ & & \\
\mathcal{A}\ar[ruu]^-{\varphi}\ar[rr]^-{f}\ar@/_2em/[dd]_-{\sigma}&  
&\mathcal{A}'\ar[luu]_-{\varphi'}\ar@/^2em/[dd]^-{\sigma'}\\
\cup\hspace{-0.31em}\shortmid & & \cup\hspace{-0.31em}\shortmid \\
B\ar[rr]^-{f\mid_{B}}&&B'}
\end{equation}
commutes by the following calculation. First we compute
\begin{align}
\sigma'(a') &= a'_{(0)}\ell'( a'_{(1)})^{\langle  1\rangle}\varphi'\big( \ell'(a'_{(1)})^{\langle  2\rangle}\big)\nonumber\\
                 &= a'_{(0)}f\big(\ell ( a'_{(1)})^{\langle  1\rangle}\big)\varphi'\big( f(\ell (a'_{(1)})^{\langle  2\rangle})\big)
\nonumber\\
                 &= a'_{(0)}f\big(\ell( a'_{(1)})^{\langle  1\rangle}\big)\varphi\big(\ell (a'_{(1)})^{\langle  2\rangle}\big).
\end{align}
Next we plug in $a'=f(a)$ to get
\begin{align}
\sigma'(f(a)) &= f(a)_{(0)} f\left(\ell\left(f( a)_{(1)}\right)^{\langle  1\rangle}\right)
\varphi\left(\ell (f(a)_{(1)})^{\langle  2\rangle}\right)\nonumber\\
                    &= f\left(a_{(0)}\right) f\left(\ell\left( a_{(1)}\right)^{\langle  1\rangle}\right)
\varphi\left(\ell (a_{(1)})^{\langle  2\rangle}\right)\nonumber\\
                    &= f\left(a_{(0)}\ell\left( a_{(1)}\right)^{\langle  1\rangle}
\varphi\left(\ell (a_{(1)})^{\langle  2\rangle}\right)\right)\nonumber\\
                    &=f(\sigma(a)).
                 \end{align}
Hence
\begin{align}
f\left(e_{(\mu, i) (\nu, j)}\right)&=f\left(\sigma \left(r_{\mu}(c_{ij})a_{\nu}\right)\right)
=\sigma'\left(f\left(r_{\mu}(c_{ij})a_{\nu}\right)\right)\nonumber\\
&=\sigma'\left(f\left(r_{\mu}(c_{ij})\right)f\left(a_{\nu}\right)\right)=\sigma'\left(r'_{\mu}(c_{ij})a'_{\nu}\right).
\end{align}
Note that $f\left(e_{(\mu, i) (\nu, j)}\right)$ is zero for $\nu\not\in I$ because then $f(a_\nu)=0$. 

Furthermore,
applying \cite[Theorem~3.1]{chg} to the strong connection $\ell'$, the basis $\{a'_\mu\}_\mu$, 
and the matrix coefficients~$c_{ij}$,
for all $\mu,\nu\in I$, $i,j\in\{1,\ldots,\dim V\}$, we obtain
\begin{equation}
\sigma'\left(r'_{\mu}(c_{ij})a'_{\nu}\right)=:e'_{(\mu, i) (\nu, j)}\,,
\end{equation}
where the $e'_{(\mu, i) (\nu, j)}$ are the entries of an idempotent matrix $e'$ such that
 $\B'^{N'}e'\cong \A'\Box^\C V$ as left $\B'$-modules. Thus, in the block matrix notation, 
we arrive at the following crucial equality
\begin{equation}
f(e)=\left(\begin{array}{cc}
                   e' & 0\\
                  d & 0
        \end{array}\right),
\end{equation}
where $d$ is unspecified.
Now, taking into account that $f(e)$ is an  idempotent matrix, we derive the equality
$de'=d$, which
allows us to verify that
\begin{equation}
\left(\begin{array}{cc}
                   e' & 0\\
                  d & 0
        \end{array}\right)=\left(\begin{array}{cc}
                   1 & 0\\
                  d & 1
        \end{array}\right)\left(\begin{array}{cc}
                   e' & 0\\
                  0 & 0
        \end{array}\right)\left(\begin{array}{cc}
                   1 & 0\\
                  d & 1
        \end{array}\right)^{-1}.
\end{equation}
Hence the corresponding finitely generated projective left $\B'$-modules are isomorphic: 
\begin{equation}
\B'\otimes_\B(A\Box^\C V)\cong\B'\otimes_\B(B^Ne)\cong\B'^Nf(e)\cong \B'^{N'}e'\cong \A'\Box^\C V.
\end{equation}
In particular, $\left(f\hspace{-0.25em}\mid_{\B}\right)_{*}[e]:=[f(e)]=[e']\in K_{0}(\B')$.
\end{proof}

\subsection{The Hopf-algebraic case revisited}

We end this section by arguing that Theorem~\ref{main} specializes to 
Theorem~\ref{hopf} in the Hopf-algebraic setting.
First, observe that the lacking assumption of the principality of~$\A'$ in Theorem~\ref{hopf} is redundant.  
Indeed, if $\ell$ is a strong connection
on $\A$ and $f\colon\A\to\A'$ is an $\cH$-equivariant algebra homomorphism, 
then $(f\otimes f)\circ\ell$ is immediately a strong connection on~$\A'$.
Furthermore, the bijectivity of 
the antipode $S$ is equivalent
to the invertibility of the canonical entwining, and the coaugmentation is readily provided by $1\in\cH$.
Finally, as is explained at the beginning of Section~1, the existence of a strong connection implies both the bijectivity of the canonical map
and equivariant projectivity.

\section{Pulling back noncommutative vector bundles associated with free actions of compact quantum groups}

Let $(H,\Delta)$ be a compact quantum group~\cite{wor}. Let $A$ be a unital C*-algebra and
\mbox{$\delta_{A}:A\rightarrow A\otimes_{\mathrm{min}}H$} an
injective  unital $*$-homomorphism, where $\otimes_{\mathrm{min}}$ denotes the minimal tensor product of C*-algebras.
 We call $\delta_{A}$ a \emph{right coaction}
of $H$ on $A$ (or a \emph{right action of the compact quantum group on a compact quantum space}) \cite{p-p95} iff
\begin{enumerate}
\item[(1)]
$(\delta_{A}\otimes\mathrm{id}_{H})\circ\delta_{A}=
(\mathrm{id}_{H}\otimes\Delta)\circ\delta_{A}$
(coassociativity),
\item[(2)]
$\{\delta_{A}(a)(1\otimes h)\;|\;a\in A,\,h\in H\}^{\mathrm{cls}}=
A\underset{\mathrm{min}}{\otimes}H$ (counitality).
\end{enumerate}
Here ``cls'' stands for ``closed linear span''. Furthermore,
a coaction $\delta_{A}$ is called \emph{free} \cite{ell}  iff
\begin{equation}
\{(a\otimes 1)\delta_{A}(\tilde{a})\;|\;a, \tilde{a}\in A\}^{\mathrm{cls}}=
A\underset{\mathrm{min}}{\otimes}H.
\end{equation}

Given a compact quantum group $(H,\Delta)$, we denote by
 $\cO (H)$ its dense cosemisimple
Hopf $*$-subalgebra spanned by the matrix coefficients of
irreducible (or finite-dimensional) unitary corepresentations~\cite{wor}.
In the same spirit, we define the
\emph{Peter-Weyl subalgebra}  of~$A$ 
(see~\cite{free}, cf.~\cite{p-p95,s-pm11}) as
\begin{equation}
\cP_H(A):=\left\{\,a\in A\,| \,\delta_{A}(a)\in A\otimes\cO (H)\,\right\}.
\end{equation}
It follows from Woronowicz's definition of a compact quantum group that
the left and right coactions of $H$ on itself by the  comultiplication are free.
Also, it is easy to check that
 \mbox{$\cP_H(H)=\cO (H)$}, and
 $\cP_H(H)$ is a right  $\cO (H)$-comodule  via the restriction-corestriction of~$\delta_{A}$~\cite{free}.
Its coaction-invariant subalgebra coincides with the fixed-point subalgebra
\begin{equation}
B:=\{b\in A\;|\;\delta_{A}(b)=b\otimes 1\}.
\end{equation}
A fundamental result concerning Peter-Weyl comodule algebras  is
that the freeness of an action of a compact quantum group $(H,\Delta)$ on a unital C*-algebra $A$ is
\emph{equivalent} to the principality of the Peter-Weyl $\cO (H)$-comodule 
algebra~$\mathcal{P}_H(A)$~\cite{free}.
The result bridges algebra and analysis allowing us to conclude from Theorem~\ref{hopf} the following
crucial claim.

\begin{theorem}\label{cor}
Let $(H,\Delta)$ be a compact quantum group, let $A$ and $A'$ be  $(H,\Delta)$-C*-algebras,
$B$ and $B'$ the corresponding fixed-point subalgebras,
and
$f:A\to A'$  an equivariant *-homomorphism. Then, if the coaction of $(H,\Delta)$ on $A$ is free and $V$
is a representation  of $(H,\Delta)$, the following left $B'$-modules are isomorphic
$$
B'\otimes_{B}\big(\mathcal{P}_H(A)\Box^{\mathcal{O}(H)} V\big)\cong 
\mathcal{P}_H(A')\Box^{\mathcal{O}(H)} V.
$$
In particular, if $V$ is finite dimensional, then the induced map 
\mbox{$f_*\colon K_0(B)\to K_0(B')$}
satisfies 
$$
f_*([\mathcal{P}_H(A)\Box^{\mathcal{O}(H)} V])
=[\mathcal{P}_H(A')\Box^{\mathcal{O}(H)} V].
$$
\end{theorem}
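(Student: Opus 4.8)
The plan is to transfer \textbf{Theorem~\ref{hopf}} from the algebraic Peter-Weyl picture to the C*-algebraic picture using the bridge between freeness and principality established in~\cite{free}. The strategy rests on the observation that every object in the statement is genuinely algebraic once we pass to Peter-Weyl subalgebras: the cotensor product $\mathcal{P}_H(A)\Box^{\mathcal{O}(H)}V$ involves only the dense Hopf $*$-subalgebra $\mathcal{O}(H)$ and the comodule algebra $\mathcal{P}_H(A)$, never the ambient C*-algebra $A$ or its minimal tensor product. So the first step is to record that $\mathcal{O}(H)$ is a cosemisimple Hopf $*$-algebra with bijective antipode (standard for the dense Hopf algebra of a compact quantum group), that $\mathcal{P}_H(A)$ and $\mathcal{P}_H(A')$ are right $\mathcal{O}(H)$-comodule algebras with coaction-invariant subalgebras $B$ and $B'$ respectively, and that by the cited equivalence the freeness of the $(H,\Delta)$-action on $A$ is exactly the principality of the $\mathcal{O}(H)$-comodule algebra $\mathcal{P}_H(A)$.

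Next I would check that the equivariant $*$-homomorphism $f\colon A\to A'$ restricts to an $\mathcal{O}(H)$-equivariant algebra homomorphism $\mathcal{P}_H(A)\to\mathcal{P}_H(A')$. This is where a small amount of care is needed: one must verify that $f$ maps the Peter-Weyl subalgebra into the Peter-Weyl subalgebra. This follows from naturality of the coaction, namely the equivariance identity $(f\otimes\id_H)\circ\delta_A=\delta_{A'}\circ f$, which forces $\delta_{A'}(f(a))\in A'\otimes\mathcal{O}(H)$ whenever $\delta_A(a)\in A\otimes\mathcal{O}(H)$. The restriction-corestriction then sends $B$ into $B'$, giving the claimed $(B'\text{-}B)$-bimodule structure on $B'$.

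With these identifications in hand, the isomorphism of left $B'$-modules
$$
B'\otimes_{B}\big(\mathcal{P}_H(A)\Box^{\mathcal{O}(H)}V\big)\cong\mathcal{P}_H(A')\Box^{\mathcal{O}(H)}V
$$
is then \emph{immediate} from \textbf{Theorem~\ref{hopf}} applied to $\cH=\mathcal{O}(H)$, $\A=\mathcal{P}_H(A)$, $\A'=\mathcal{P}_H(A')$, with $\A$ principal and $f|_{\mathcal{P}_H(A)}$ the required equivariant algebra homomorphism. The point of the paper's detour through Peter-Weyl theory is precisely that Theorem~\ref{hopf} was proved for finite-dimensional $V$, but the module isomorphism itself holds for $V$ of arbitrary dimension; here the version needed for general $V$ is supplied by \textbf{Theorem~\ref{faith}}, whose hypotheses (cosemisimplicity of $\mathcal{O}(H)$ and faithful flatness of the principal comodule algebra over its base) are met for the Peter-Weyl comodule algebra of a free action. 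Finally, when $V$ is finite dimensional, both cotensor products are finitely generated projective $B$- and $B'$-modules, so they determine genuine $K_0$-classes, and the $K_0$ statement $f_*([\mathcal{P}_H(A)\Box^{\mathcal{O}(H)}V])=[\mathcal{P}_H(A')\Box^{\mathcal{O}(H)}V]$ reads off from the module isomorphism exactly as in Theorem~\ref{hopf}.

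The main obstacle I anticipate is not the final deduction, which is formal, but the verification that the passage to Peter-Weyl subalgebras behaves well: that $f$ indeed restricts, that the fixed-point subalgebra $B$ in the C*-sense coincides with the coaction-invariant subalgebra of $\mathcal{P}_H(A)$ in the algebraic sense, and above all that the abstract freeness-equals-principality equivalence of~\cite{free} can be invoked verbatim to guarantee that $\mathcal{P}_H(A)$ is a principal $\mathcal{O}(H)$-comodule algebra admitting a strong connection. Once that dictionary is in place, the theorem is a direct corollary of the already-established algebraic results.
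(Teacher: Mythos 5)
Your proposal is correct and, in its core, follows the paper's own strategy: pass to the Peter--Weyl subalgebras, invoke the freeness-equals-principality theorem of \cite{free}, check that $f$ restricts and corestricts to an $\mathcal{O}(H)$-equivariant algebra map $\mathcal{P}_H(A)\to\mathcal{P}_H(A')$ (a verification you spell out more explicitly than the paper does), and apply Theorem~\ref{hopf}. Where you genuinely diverge is the case of infinite-dimensional $V$: the paper's proof handles it by noting that, since $\mathcal{O}(H)$ is cosemisimple, every comodule is a direct sum of finite-dimensional ones, so the whole statement reduces to the finite-dimensional case covered by Theorem~\ref{hopf}; you instead invoke Theorem~\ref{faith} directly. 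That route is legitimate --- the paper itself remarks after the proof that Theorem~\ref{cor} may be viewed as a specialization of Theorem~\ref{faith} --- but your verification of its hypotheses is aimed at the wrong algebra: Theorem~\ref{faith} requires that $B'\subseteq\mathcal{P}_H(A')$ be a Galois extension with $\mathcal{P}_H(A')$ faithfully flat as a right $B'$-module, whereas freeness is assumed only for the action on $A$, so ``the Peter--Weyl comodule algebra of a free action'' does not by itself supply these conditions. The missing (one-line) step is that $(f\otimes f)\circ\ell$ is a strong connection on $\mathcal{P}_H(A')$, so that $\mathcal{P}_H(A')$ is itself principal, hence Galois and, by equivariant projectivity and \cite{sch-sch}, faithfully flat over $B'$. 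With that supplied, both arguments are complete: the paper's reduction via cosemisimplicity is more economical, needing nothing about $A'$ beyond what Theorem~\ref{hopf} already tolerates, while your route gives the isomorphism for arbitrary $V$ in one stroke of the faithful-flatness machinery rather than through a direct-sum decomposition.
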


\begin{proof}
 Note that, since $\mathcal{O}(H)$ is cosemisimple, any comodule is a direct sum of finite-dimensional
comodules, so that it suffices to prove the theorem for finite-dimesional representations of $(H,\Delta)$.
By \cite{free}, the freeness  of the $(H,\Delta)$-action is equivalent to principality of the Peter-Weyl
comodule algebra~$\mathcal{P}_H(A)$, which (as explained at the beginning of Section~1) is
tantamount to the existence of a strong connection:
$\ell: \mathcal{O}(H)\rightarrow \mathcal{P}_H(A)\otimes \mathcal{P}_H(A)$. 
Now the claim follows from Theorem~\ref{hopf} applied to the case  
$\mathcal{A}=\mathcal{P}_H(A)$, $\mathcal{H}=\mathcal{O}(H)$, and 
$\B=B$.
\end{proof}

Recall that the existence of a strong connection implies equivariant projectivity, which (by \cite{sch-sch}) is 
equivalent to faithful flatness. Combining this with the cosemisimplicity of~$\mathcal{O}(H)$, we can view
the above Theorem~\ref{cor} as a specialization of Theorem~\ref{faith}.

\subsection{Iterated equivariant noncommutative join construction}

Let $G$ be a topological group. Recall that the join of two $G$-spaces is again a $G$-space for the diagonal action of~$G$.
It is this action that is natural for topological constructions. A straightforward generalization of the diagonal action to 
the realm of compact quantum groups $(H,\Delta)$ acting on C*-algebras would require
that there exists a *-homomorphism $H\otimes_{\rm min}H\to H$ extending the algebraic multiplication map, which is typically not the case.
However, when taking the join $X*G$ of a $G$-space $X$ with $G$, the diagonal action of $G$ on $X*G$ can be gauged to the action on 
the $G$-component
alone.  Thus we obtain an equivalent classical construction  that is amenable to noncommutative deformations. (See \cite{dhh15} for details,
cf.~\cite{nv10} for an alternative approach.) 

\begin{definition}\cite{dhh15,free}\label{joindef} 
For any compact quantum group $(H,\Delta)$ acting  on a unital C*-algebra~$A$  via
$\delta_A\colon A\to A\otimes_{\mathrm{min}}H$, we define
its \emph{equivariant join with $H$} to be the unital C*-algebra
\vspace*{-2mm}
$$
A\overset{\delta_A}{\circledast} H:=\left\{\!f \in C([0,1],A)\!\!\underset{\mathrm{min}}{\otimes} 
\!\!H=C([0,1],A\!\!\underset{\mathrm{min}}{\otimes}\! \!H)\;\Big{|}\;
	f(0) \in \mathbb{C}\! \otimes\! H,\;f(1) \in \delta_A(A)\!\right\}.
$$
\end{definition}
\begin{theorem}\label{2}\cite{free}
Let  $(H,\Delta)$ be a compact quantum group acting  on a unital C*-algebra~$A$.
Then the *-homomorphism
$$
\mathrm{id}\!\otimes\!\Delta\colon\; C([0,1],A)\! \underset{\mathrm{min}}{\otimes} \!H\;\longrightarrow\;
C([0,1],A)\! \underset{\mathrm{min}}{\otimes} \!H\! \underset{\mathrm{min}}{\otimes} \!H
$$
restricts and corestricts to a *-homomorphism
$$
\delta_\Delta\colon A\circledast^{\delta_A} H\longrightarrow (A\circledast^{\delta_A} H)
\! \underset{\mathrm{min}}{\otimes} \!H
$$
defining an action of $(H,\Delta)$  on  \mbox{$A\!\circledast^{\delta_{A}}\!H$}. If the action of
$(H,\Delta)$  on  $A$ is free, then so is the above action of $(H,\Delta)$ on $A\!\circledast^{\delta_{A}}\!H$.
\end{theorem}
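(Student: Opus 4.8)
The plan is to treat the two assertions in turn: first, that $\mathrm{id}\otimes\Delta$ really does restrict and corestrict to a coaction $\delta_\Delta$, and second, that this coaction inherits freeness from $\delta_A$. For the well-definedness I would simply check that $\mathrm{id}\otimes\Delta$ carries the two boundary conditions cutting out $A\circledast^{\delta_A}H$ into the two boundary conditions cutting out $(A\circledast^{\delta_A}H)\otimes_{\mathrm{min}}H$; coassociativity of $\delta_\Delta$ is then the coassociativity of $\Delta$, and its counitality follows, by a density argument, from the density (cancellation) conditions of $(H,\Delta)$. For freeness I would invoke the equivalence, recalled just before Theorem~\ref{cor}, between freeness of a compact quantum group action and principality of the associated Peter-Weyl comodule algebra, i.e.\ the existence of a strong connection, and produce such a connection on the join from the connections living at its two ends.

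For the first part, given $f\in A\circledast^{\delta_A}H$ set $F:=(\mathrm{id}\otimes\Delta)(f)$. At the endpoint $0$ the condition $f(0)\in\mathbb{C}\otimes H$ reads $f(0)=1_A\otimes h_0$, so $F(0)=1_A\otimes\Delta(h_0)\in\mathbb{C}\otimes H\otimes_{\mathrm{min}}H$, which is precisely the $0$-endpoint condition for $(A\circledast^{\delta_A}H)\otimes_{\mathrm{min}}H$. At the endpoint $1$ the condition $f(1)\in\delta_A(A)$ gives $f(1)=\delta_A(a)$, and coassociativity of $\delta_A$ yields
\begin{equation*}
F(1)=(\mathrm{id}\otimes\Delta)(\delta_A(a))=(\delta_A\otimes\mathrm{id})(\delta_A(a))\in\delta_A(A)\otimes_{\mathrm{min}}H,
\end{equation*}
which is the $1$-endpoint condition. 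Hence $F\in(A\circledast^{\delta_A}H)\otimes_{\mathrm{min}}H$, so $\delta_\Delta$ is well defined; being a corestriction of the injective unital $*$-homomorphism $\mathrm{id}\otimes\Delta$ it is again one, its coassociativity is that of $\Delta$, and its counitality is inherited, by the same kind of density argument, from the density conditions of $(H,\Delta)$.

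For freeness it then suffices, by the cited equivalence, to exhibit a strong connection $\ell_J\colon\mathcal{O}(H)\to\mathcal{P}_H(A\circledast^{\delta_A}H)^{\otimes2}$. Freeness of $\delta_A$ supplies a strong connection $\ell_A$ on $\mathcal{P}_H(A)$, while the coaction of $H$ on itself carries the canonical strong connection $\ell_H(h)=S(h_{(1)})\otimes h_{(2)}$; these govern the $1$-endpoint fibre $\delta_A(\mathcal{P}_H(A))$ and the $0$-endpoint fibre $\mathbb{C}\otimes\mathcal{O}(H)$ respectively. I would glue them along $[0,1]$ by a partition of unity, splitting the argument through $\Delta$ so that each group-like piece is steered to the appropriate end (this is the join connection of \cite{dhh15}). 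The hard part will be verifying that the glued map genuinely lands in $\mathcal{P}_H(A\circledast^{\delta_A}H)\otimes\mathcal{P}_H(A\circledast^{\delta_A}H)$---that each of its two legs separately satisfies both boundary conditions of the join---while still obeying $\ell_J(1)=1\otimes1$, $m\circ\ell_J=\varepsilon$, and bicolinearity; in particular the counit condition forces the interpolating weights appearing in the two legs to multiply back to $1$ at every point of $[0,1]$, so the gluing has to be set up with this constraint built in. An alternative that sidesteps importing the join connection is to verify Ellwood's span condition for $\delta_\Delta$ directly: at each interior parameter $s\in(0,1)$ the evaluations of $x$ and $y$ are unconstrained, so by the density conditions of $(H,\Delta)$ and the unitality of $A$ the span there already fills $A\otimes_{\mathrm{min}}H\otimes_{\mathrm{min}}H$, whereas the two endpoints reduce to freeness of $\Delta$ at $0$ and, via coassociativity, to freeness of $\delta_A$ at $1$. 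Either way the one genuine obstacle is the same: globalizing these pointwise (fibrewise) freeness statements to the whole path algebra by a partition of unity compatible with the boundary conditions.
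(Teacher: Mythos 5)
First, a point of order: the paper you were given contains \emph{no} proof of Theorem~\ref{2} --- it is quoted from \cite{free}, where the freeness of $\delta_\Delta$ is obtained through the Peter--Weyl picture and the explicit strong connection on the join constructed in \cite{dhh15} --- so your attempt can only be judged on its own merits, and on those merits it has two genuine gaps. The first is hidden in the ``easy'' half. Verifying the two endpoint conditions on $F:=(\mathrm{id}\otimes\Delta)(f)$ shows only that $F$ lies in
\begin{equation*}
\widetilde{J}:=\big\{\,G\in C([0,1],A\otimes_{\mathrm{min}}H\otimes_{\mathrm{min}}H)\;\big|\;
G(0)\in\mathbb{C}\otimes H\otimes_{\mathrm{min}}H,\ G(1)\in\delta_A(A)\otimes_{\mathrm{min}}H\,\big\},
\end{equation*}
whereas the corestriction claim is that $F\in(A\circledast^{\delta_A}H)\otimes_{\mathrm{min}}H$. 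The inclusion $(A\circledast^{\delta_A}H)\otimes_{\mathrm{min}}H\subseteq\widetilde{J}$ is obvious, but the equality you implicitly use is not: minimal tensor products do not in general commute with C*-subalgebras cut out by intersection/pullback conditions. Here equality does hold, but it needs an argument, e.g.: for $G\in\widetilde{J}$, the functions $t\mapsto(1-t)G(0)$ and $t\mapsto t\,G(1)$ lie in $(A\circledast^{\delta_A}H)\otimes_{\mathrm{min}}H$ (approximate $G(0)$ and $G(1)$ by sums of elementary tensors, noting that $t\mapsto(1-t)(1\otimes h)$ and $t\mapsto t\,\delta_A(a)$ satisfy both endpoint conditions), while the remainder vanishes at both endpoints and hence lies in $C_0((0,1))\otimes A\otimes_{\mathrm{min}}H\otimes_{\mathrm{min}}H\subseteq(A\circledast^{\delta_A}H)\otimes_{\mathrm{min}}H$. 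Without some such decomposition, your endpoint check places $\delta_\Delta(f)$ only in a possibly larger C*-algebra.

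The second gap is the substance of the theorem: neither freeness nor even counitality of $\delta_\Delta$ is actually proved --- both routes you describe stop exactly where the work begins, with ``the hard part'' and ``the one genuine obstacle'' acknowledged rather than discharged. To be fair, your Ellwood route does close: the set $D:=\{(x\otimes1)\delta_\Delta(y)\,|\,x,y\in A\circledast^{\delta_A}H\}^{\mathrm{cls}}$ is a closed $C([0,1])$-submodule of $(A\circledast^{\delta_A}H)\otimes_{\mathrm{min}}H$; its evaluation at any interior $t$ is dense (unitality of $A$ plus the cancellation property of $(H,\Delta)$), at $t=0$ density is cancellation, and at $t=1$ it follows by applying the isometry $\delta_A\otimes\mathrm{id}$ to Ellwood's condition for $\delta_A$ together with $(\mathrm{id}\otimes\Delta)\circ\delta_A=(\delta_A\otimes\mathrm{id})\circ\delta_A$; then, for $G$ and $\varepsilon>0$, one picks for each $t$ an element of $D$ that is $\varepsilon$-close to $G$ at $t$, spreads the estimate to a neighborhood by continuity, and patches with a finite partition of unity, which stays in $D$ precisely because $D$ is a $C([0,1])$-module. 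The same scheme gives counitality, which you dismissed with ``a density argument.'' None of this appears in your text, and the alternative you gesture at --- gluing the strong connection on $\mathcal{P}_H(A)$ with the trivial one on $\mathcal{O}(H)$ --- is exactly the nontrivial construction of \cite{dhh15} on which \cite{free} relies; importing it by name is citation, not proof. As written, then, the proposal is a correct plan whose decisive steps are missing.
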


Note that
any equivariant *-homomorphism $F:A\to A'$ of $(H,\Delta)$-C*-algebras
induces an $(H,\Delta)$-equivariant *-homomorphism 
$A\!\circledast^{\delta_{A}}\! H\rightarrow A'\!\circledast^{\delta_{A'}}\! H$.
Indeed, since the *-homomorphism $F$ is equivariant, the *-homomorphism
\[
\id\otimes F\otimes\id:C([0,1])\underset{\mathrm{min}}{\otimes}A\underset{\mathrm{min}}{\otimes} H
\longrightarrow C([0,1])\underset{\mathrm{min}}{\otimes}A'\underset{\mathrm{min}}{\otimes}H
\]
restricts and corestricts to an equivariant
*-homomorphism \mbox{$f\!:\!A\!\circledast^{\delta_{A}}\! H\!\to\! A'\!\circledast^{\delta_{A'}}\! H$}. 
It is clear  that the composition of equivariant maps induces the composition of the induced equivariant maps. 
We refer to this fact as the \emph{naturality} of the noncommutative equivariant join with~$H$.   

Starting from $A=H$, we can iterate Definition~\ref{joindef} finitely many times. According to 
Theorem~\ref{2}, the thus $n$-times iterated equivariant join $A_n$ comes equipped
 with a free $(H,\Delta)$-action.
The following lemma gives a construction of an equivariant map $A_n\to A_1$ allowing us later 
on to apply Theorem~\ref{cor}. 
\begin{lemma}\label{char}
Let $(H,\Delta)$ be a compact quantum group such that the C*-algebra $H$ admits a character. 
Then,  for any $n\in\mathbb{N}\setminus\{0\}$, there exists an 
equivariant *-homomorphism
from the $n$-iterated equivariant join $A_n$ to $A_1:=H\!\circledast^{\Delta}\!H$.
\end{lemma}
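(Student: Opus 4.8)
The plan is to proceed by induction on $n$, reducing the statement to the construction of a single equivariant $*$-homomorphism that removes one copy of $H$ from the iterated join, and then to spread this single reduction over the whole tower $A_{0}:=H,\ A_{1},\ \dots,\ A_{n}$, with $A_{k}:=A_{k-1}\circledast^{\delta_{A_{k-1}}}H$, by means of the naturality of the noncommutative equivariant join with~$H$. The case $n=1$ is trivial, the identity of $A_{1}$ being the required map.

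The first step is to produce an equivariant $*$-homomorphism $\rho\colon A_{1}=H\circledast^{\Delta}H\to A_{0}=H$ that collapses one copy of~$H$. By Definition~\ref{joindef}, every $f\in A_{1}$ satisfies $f(1)\in\Delta(H)$, so evaluation at the join endpoint $t=1$ lands in $\Delta(H)$; composing with the inverse of the injective comultiplication on its image gives a unital $*$-homomorphism $\rho=\Delta^{-1}\circ\mathrm{ev}_{1}\colon A_{1}\to H$. The hypothesis that the C*-algebra $H$ admits a character $\chi$ enters here as the quantum analogue of a basepoint: it furnishes the point of the quantum group along which the discarded factor is contracted, so that $\rho$ mirrors the classical inclusion of a sub-join obtained by sending the omitted coordinates to the basepoint, and, when $\chi$ extends the counit, $\rho=(\mathrm{id}\otimes\chi)\circ\mathrm{ev}_{1}$ explicitly.

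I expect the verification of equivariance of $\rho$ to be the main obstacle. Concretely, one must show that $\rho$ intertwines the join coaction $\delta_{\Delta}$ on $A_{1}$ --- which by Theorem~\ref{2} is the corestriction of $\mathrm{id}\otimes\Delta$ --- with the coaction $\Delta$ on $A_{0}=H$. The crux is the behaviour at $t=1$: on $\Delta(H)$ the map $\mathrm{id}\otimes\Delta$ coincides with $(\Delta\otimes\mathrm{id})\circ\Delta$ by coassociativity, and passing this identity through $\Delta^{-1}$ (equivalently, contracting the spare leg through $\chi$ compatibly with $\Delta$) returns exactly $\Delta\circ\rho$. Checking in addition that $\rho$ respects the two boundary conditions of the join is routine.

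Finally, I would invoke naturality to finish. Applying the equivariant join with $H$ to $\rho\colon A_{1}\to A_{0}$ produces an equivariant $*$-homomorphism $A_{2}\to A_{1}$; applying it again to the map just obtained produces $A_{3}\to A_{2}$, and iterating yields equivariant $*$-homomorphisms $\rho_{k}\colon A_{k}\to A_{k-1}$ for $2\le k\le n$. Their composite $\rho_{2}\circ\rho_{3}\circ\cdots\circ\rho_{n}\colon A_{n}\to A_{1}$ is then the desired equivariant $*$-homomorphism, which completes the induction.
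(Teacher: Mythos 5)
Your proposal is correct, and it shares the paper's skeleton: produce one equivariant unital *-homomorphism $A_1\to A_0=H$ that strips off a single join factor, then propagate it up the tower by the naturality of the equivariant join and compose to get $A_n\to A_1$. Where you genuinely differ is in the one-step map. The paper evaluates at the interior point $t=\frac12$ and collapses the \emph{left} tensor leg with the character, $f_\chi=\mathrm{ev}_{\frac12}\otimes\chi\otimes\id\colon H\circledast^{\Delta}H\to H$, whereas you evaluate at the endpoint $t=1$, where the boundary condition $f(1)\in\Delta(H)$ holds, and invert $\Delta$ on its image: $\rho=\Delta^{-1}\circ\mathrm{ev}_1$. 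This is legitimate --- $\Delta$ is an injective *-homomorphism, hence a *-isomorphism onto its closed image --- and your equivariance argument is sound: writing $f(1)=\Delta(h)$,
\begin{equation*}
(\rho\otimes\id)\bigl(\delta_\Delta(f)\bigr)
=(\Delta^{-1}\otimes\id)\bigl((\id\otimes\Delta)(\Delta(h))\bigr)
=(\Delta^{-1}\otimes\id)\bigl((\Delta\otimes\id)(\Delta(h))\bigr)
=\Delta(h)=\Delta(\rho(f)).
\end{equation*}
What your route buys is significant: $\rho$ makes no use of $\chi$, so your argument actually proves the lemma for an \emph{arbitrary} compact quantum group, i.e.\ the hypothesis that the C*-algebra $H$ admits a character can be dropped. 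Classically, your $\rho$ is dual to the equivariant inclusion of $G$ as the $t=1$ end of the join, while the paper's $f_\chi$ is dual to an equivariant embedding of $G$ at the middle level through the basepoint $\chi$; both exist, but yours exists unconditionally. (You could even shortcut the induction: the same coassociativity computation shows that $\delta_{A_{k-1}}^{-1}\circ\mathrm{ev}_1\colon A_k\to A_{k-1}$ is equivariant at every level, so composing these maps gives $A_n\to A_1$ with no appeal to naturality.)

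One caveat concerns your prose, not your mathematics: the character plays no role in your construction, and the sentences assigning it one are misleading. The identity $\rho=(\id\otimes\chi)\circ\mathrm{ev}_1$ holds only when $\chi$ restricts to the counit of $\mathcal{O}(H)$; for a general character, $(\id\otimes\chi)\circ\mathrm{ev}_1$ is \emph{not} equivariant, because the coaction $\delta_\Delta$ lives on the right tensor leg --- which is precisely why the paper applies $\chi$ to the left leg instead. Since your actual map is $\Delta^{-1}\circ\mathrm{ev}_1$, this confusion creates no gap, but those remarks should be deleted rather than read as an invocation of the hypothesis.
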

\begin{proof}
If $\chi:H\to\mathbb{C}$ is a character,
then
\[
f_\chi:=\mathrm{ev}_{\frac{1}{2}}\otimes\chi\otimes \id:H\!\circledast^{\Delta}\! H\longrightarrow H
\]
is an equivariant
*-homomorphism. 
More generally, applying  $f_\chi$ to the leftmost  factor in the $n$-times iterated equivariant join $A_n$,
 we obtain an equivariant map to the $(n-1)$-times iterated equivariant join $A_{n-1}$:
\[
\id_{C([0,1])}^{\otimes\, (n-1)}\otimes f_\chi\otimes\id_H^{\otimes\,(n-1)}\colon A_n\longrightarrow A_{n-1}.
\]
Composing all these maps, we obtain an equivariant map $A_n\to A_1$, as desired.
\end{proof}

\subsection{Noncommutative tautological quaternionic line bundles and their duals}

To fix notation, let us begin by recalling the definition of $SU_{q}(2)$~\cite{w-sl87}.
We take $H=C(SU_{q}(2))$ to be
the universal unital C*-algebra 
generated by $\alpha$ and $\gamma$ subject to the relations
\begin{equation}
\label{suq2_relations}
\alpha \gamma = q \gamma \alpha ,\,  \;\ \alpha \gamma^* = q \gamma^* \alpha ,\,  \;\ \gamma \gamma^* 
= \gamma^* \gamma,\, \;\
\alpha^* \alpha + \gamma^* \gamma = 1, \,  \;\ \alpha \alpha^* + q^2 \gamma \gamma^* = 1, \, 
\end{equation}
where $0<q\leq 1$. The coproduct $\Delta: C(SU_{q}(2))\to C(SU_{q}(2))\otimes_{\rm min}C(SU_{q}(2))$ is given by the formula
\[
\Delta
\begin{pmatrix}
\alpha& -q\gamma^*\\
\gamma & \alpha^*
\end{pmatrix}
=
\begin{pmatrix}
\alpha\otimes 1& -q\gamma^*\otimes 1\\
\gamma\otimes 1 & \alpha^*\otimes 1
\end{pmatrix}
\begin{pmatrix}
1\otimes\alpha& 1\otimes -q\gamma^*\\
1\otimes\gamma & 1\otimes\alpha^*
\end{pmatrix}.
\]
We call the matrix 
\[
u:=\left( \begin{matrix} \alpha& -q\gamma^*\\
\gamma & \alpha^* \end{matrix} \right)
\]
 the \emph{fundamental representation} matrix of~$SU_q(2)$. 
The counit $\varepsilon$ and the antipode $S$ of the Hopf algebra $\mathcal{O}(SU_{q}(2))$ are respectively given by the formulas
\[
\varepsilon
\begin{pmatrix}
\alpha& -q\gamma^*\\
\gamma & \alpha^*
\end{pmatrix}
=
\begin{pmatrix}
1& 0\\
0& 1
\end{pmatrix},
\qquad
S
\begin{pmatrix}
\alpha& -q\gamma^*\\
\gamma & \alpha^*
\end{pmatrix}
=
\begin{pmatrix}
\alpha^*& \gamma^*\\
-q\gamma & \alpha
\end{pmatrix}.
\]
Combining the antipode $S$
with the matrix transposition, we obtain the matrix 
\[
u^{\vee}:=S(u^T)=\left( \begin{matrix} \alpha^*& -q\gamma\\
\gamma^* & \alpha \end{matrix} \right)
\]
of the representation \emph{contragredient} to the fundamental representation. Note, however, that
\[\label{equivalence}
\left( \begin{matrix} \alpha^*& -q\gamma\\
\gamma^* & \alpha \end{matrix} \right)=\left( \begin{matrix} 0& -q\\
1 & 0 \end{matrix} \right)\left( \begin{matrix} \alpha& -q\gamma^*\\
\gamma & \alpha^* \end{matrix} \right)\left( \begin{matrix} 0& -q\\
1 & 0 \end{matrix} \right)^{-1},
\]
which means that $u^{\vee}$ and $u$ are equivalent as complex  representations  for arbitrary~$q$. 

Motivated by the classical situation, we denote the $n$-iterated equivariant join of $C(SU_q(2))$ 
by $C(S^{4n+3}_q)$. Note that, except for $n=0$, our quantum spheres are different from 
Vaksman-Soibel'man
quantum spheres~\cite{vs91}. Indeed, it follows immediately from the defining relations 
 of the C*-algebra of any Vaksman-Soibel'man
quantum sphere that its space of characters is always a circle.
In contrast, the space of characters of  $C(S^{4n+3}_q)$ always contains
the torus $\mathbb{T}^{n+1}$ appearing as the Cartesian product of the circles of characters of
 copies of~$C(SU_q(2))$
in the iterated equivariant join. 

The quaternionic projective space $\mathbb{HP}^n$ is defined as the quotient space\linebreak 
\mbox{$(\mathbb{H}^{n+1}\setminus \{0\})/\mathbb{H}^\times$} with respect to 
the right multiplication by non-zero quaternions  of non-zero quaternionic column vectors in 
$\mathbb{H}^{n+1}$.
Observe that the above quotienting
restricts to quotienting the space of unit vectors $S^{4n+3}\subset \mathbb{H}^{n+1}$ by 
the right multiplication by unit quaternions. The latter can be identified with the group $SU(2)$.
Since, for $q=1$, the action of $(C(SU_q(2)),\Delta)$ on $C(S^{4n+3}_q)$
is equivalent to the above standard right $SU(2)$-action  on $S^{4n+3}$ with $\mathbb{HP}^n$
 as the space of orbits,
we propose the following definition.
\begin{definition}
We define the C*-algebra of the $n$-th \emph{quantum quaternionic projective space} 
as the fixed-point subalgebra
\begin{align*}
C(\mathbb{HP}^n_q):=C(S^{4n+3}_q/SU_q(2)):=&
C(S^{4n+3}_q)^{SU_q(2)}\\
:=& \big\{b\in C(S^{4n+3}_q)\;|\; \delta_{C(S^{4n+3}_q)}(b)
= b\otimes 1\big\}.
\end{align*}
\end{definition}

For the sake of brevity, let us denote the Hopf algebra $\mathcal{O}(C(SU_q(2)))$ by\linebreak
 $\mathcal{O}(SU_q(2))$,
and the $\mathcal{O}(SU_q(2))$-comodule algebra $\mathcal{P}_{C(SU_q(2))}(C(S^{4n+3}_q))$ 
by\linebreak
 $\mathcal{P}_{SU_q(2)}(S^{4n+3}_q)$.
Since the $(C(SU_q(2)),\Delta)$-action on $C(S^{4n+3}_q)$ is free, for any 
finite-dimensional corepresentation $V$ of $\mathcal{O}(SU_q(2))$,
the associated left 
$C(\mathbb{HP}^n_q)$-module $\mathcal{P}_{SU_q(2)}(S^{4n+3}_q)\Box^{\mathcal{O}(SU_q(2))} V$ 
is finitely generated projective by~\cite[Theorem~1.2]{dy13}.

Next, let $V$ and $V^{\vee}$ be   left corepresentations of $\mathcal{O}(SU_q(2))$ on $\mathbb{C}^2$ given
respectively by the fundamental and  contragredient representation matrices $u$ and $u^{\vee}$.
Now consider the corresponding associated $C(\mathbb{HP}^n_q)$-modules
\begin{align}
&\tau_{{}_{\mathbb{HP}^n_q}}:=\mathcal{P}_{SU_q(2)}(S^{4n+3}_q)\Box^{\mathcal{O}(SU_q(2))} V^{\vee},\\
&\tau^*_{{}_{\mathbb{HP}^n_q}}:=\mathcal{P}_{SU_q(2)}(S^{4n+3}_q)\Box^{\mathcal{O}(SU_q(2))} V.
\end{align}
For $q=1$, under the standard embedding $\mathbb{C}\subset \mathbb{H}$, 
the first module is the section module of the tautological quaternionic line bundle, 
and the second module is the section module
of its complex dual. 
Hence we refer to $\tau_{{}_{\mathbb{HP}^n_q}}$ as
the section module of the \emph{noncommutative tautological quaternionic line bundle}, and   to
 $\tau^*_{{}_{\mathbb{HP}^n_q}}$ as the section module  of its complex  dual.
It follows from  the functoriality of the cotensor product and the equivalence \eqref{equivalence} that the associated modules 
$\tau_{{}_{\mathbb{HP}^n_q}}$ and $\tau^*_{{}_{\mathbb{HP}^n_q}}$ are isomorphic.

Note that restricting coefficients from $\mathbb{H}$ to $\mathbb{C}$ is compatible with dualization. 
This means that, for every right $\mathbb{H}$-module (vector space) $W$, the  map from the complex vector space 
${\rm Hom}_{\mathbb{C}}(W, \mathbb{C})$ to the left $\mathbb{H}$-module (vector space)
${\rm Hom}_{\mathbb{H}}(W, \mathbb{H})$
defined by 
\[
\alpha\longmapsto (w\mapsto \alpha(w) - \alpha(wj)j)
\]
is an isomorphism of complex vector spaces. Here $j$ is the quaternionic imaginary 
unit anticommuting with the complex imaginary unit, and the vector-space structures on Hom-spaces
are given by the left multiplication on  values.
In this way, we identify (as complex vector bundles) the complex dual of a right quaternionic vector bundle with its quaternionic dual. 

The following result is our main application of Theorem~\ref{cor}.
\begin{theorem}\label{2.5}
For any $n\in\mathbb{N}\setminus\{0\}$ and $0<q\leq 1$, the finitely generated projective left
$C(\mathbb{HP}^n_q)$-modules $\tau_{{}_{\mathbb{HP}^n_q}}$ and $\tau^*_{{}_{\mathbb{HP}^n_q}}$
are \emph{not} stably free, i.e.,  the noncommutative tautological quaternionic line bundle and its dual
 are \emph{not} stably trivial as noncommutative complex vector bundles.
\end{theorem}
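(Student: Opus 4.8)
The plan is to reduce the nonfreeness of the bundles over $C(\mathbb{HP}^n_q)$ to the already-understood case $n=1$, using the equivariant map supplied by Lemma~\ref{char} together with the functoriality of $K_0$ encapsulated in Theorem~\ref{cor}. First I would invoke Lemma~\ref{char}: since $C(SU_q(2))$ admits a character (e.g.\ the classical-point evaluation $\alpha\mapsto 1$, $\gamma\mapsto 0$), there is an equivariant $*$-homomorphism $f\colon A_n\to A_1$, i.e.\ $f\colon C(S^{4n+3}_q)\to C(S^{7}_q)$, intertwining the $C(SU_q(2))$-actions. By Theorem~\ref{cor} applied with $V=V^\vee$ (and separately $V$), the induced map $f_*\colon K_0(C(\mathbb{HP}^n_q))\to K_0(C(\mathbb{HP}^1_q))$ carries the class of $\tau_{{}_{\mathbb{HP}^n_q}}$ to the class of $\tau_{{}_{\mathbb{HP}^1_q}}$, and likewise for the duals.

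Next I would observe that a stably free module has a class in the image of $K_0(\mathbb{C})=\Z$ under the unital inclusion $\mathbb{C}\hookrightarrow C(\mathbb{HP}^n_q)$; equivalently, its $K_0$-class is an integer multiple of the class of the free rank-one module. Because $f_*$ is a group homomorphism sending $[\text{free}]$ to $[\text{free}]$, it maps stably-free classes to stably-free classes. Hence it suffices to prove that $\tau_{{}_{\mathbb{HP}^1_q}}$ (equivalently $[\tau^*_{{}_{\mathbb{HP}^1_q}}]$, which equals it by the isomorphism noted via \eqref{equivalence}) is \emph{not} stably free in $K_0(C(\mathbb{HP}^1_q))$: if the class over $\mathbb{HP}^n_q$ were stably free, its image over $\mathbb{HP}^1_q$ would be stably free too, a contradiction.

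The remaining, and genuinely computational, step is therefore the base case $n=1$: to exhibit a $K_0$-invariant that distinguishes $[\tau_{{}_{\mathbb{HP}^1_q}}]$ from every integer multiple of $[1]$. Here I would use the Chern--Galois machinery of Theorem~\ref{main}: the strong connection on $\mathcal{P}_{SU_q(2)}(S^{7}_q)$ produces an explicit idempotent matrix $e$ over $C(\mathbb{HP}^1_q)$ representing the class, and one computes a numerical invariant (a Chern--Connes pairing with a cyclic cocycle, or equivalently a trace/rank computation after mapping to a simpler quotient) whose value on $[\tau_{{}_{\mathbb{HP}^1_q}}]$ is not an integer multiple of its value on $[1]$. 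At $q=1$ this recovers the classical second Chern number of the tautological quaternionic line bundle over $\mathbb{HP}^1=S^4$, which is nonzero; the continuity/homotopy-invariance of $K$-theory in $q$ (or a direct computation valid for all $0<q\le 1$) then transports the result to the quantum case.

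The main obstacle I expect is precisely this base case: producing a cyclic cocycle on $C(\mathbb{HP}^1_q)$ and carrying out the pairing with the idempotent $e$, since the strong connection on the iterated join must be written down explicitly and the resulting idempotent entries are noncommutative elements of $C(\mathbb{HP}^1_q)$ whose pairing is nontrivial to evaluate. By contrast, once the $n=1$ invariant is established, the passage to general $n$ is purely formal, resting entirely on the naturality of the equivariant join and Theorem~\ref{cor}.
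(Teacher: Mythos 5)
Your reduction from general $n$ to $n=1$ is exactly the paper's argument: Lemma~\ref{char} (via a character of $C(SU_q(2))$) gives an equivariant *-homomorphism $C(S^{4n+3}_q)\to C(S^{7}_q)$; Theorem~\ref{cor} shows that the induced map on $K_0$ of the fixed-point algebras sends $[\tau^*_{{}_{\mathbb{HP}^n_q}}]$ to $[\tau^*_{{}_{\mathbb{HP}^1_q}}]$ while (being induced by a unital homomorphism) it sends $[1]$ to $[1]$, so stable freeness would propagate down to $n=1$; and the statement for $\tau_{{}_{\mathbb{HP}^n_q}}$ follows from the isomorphism induced by \eqref{equivalence}. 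This part is correct and is in fact the entire content of the paper's proof.

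The gap is in your base case $n=1$. The paper does not prove it: it cites \cite[Section~3.2]{bu}, where the non-vanishing of an index pairing computed in \cite{ind} for the fundamental representation matrix $u$ is shown to imply that $\tau^*_{{}_{\mathbb{HP}^1_q}}$ is not stably free. Your plan --- build the idempotent from a strong connection and pair it with a cyclic cocycle --- is in the spirit of those references, but the specific shortcut you propose, namely computing the classical second Chern number at $q=1$ and then invoking ``continuity/homotopy-invariance of $K$-theory in $q$,'' is not a valid step as stated. $K$-theory is homotopy invariant with respect to homotopies of *-homomorphisms, not with respect to a deformation parameter of the algebras themselves; to use $q$-dependence you would need to show that the $C(\mathbb{HP}^1_q)$ assemble into a continuous field with constant $K$-theory, that your idempotents form a continuous section, and that the pairing is locally constant in $q$ --- work comparable to, not easier than, the direct computation. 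What the cited literature actually does is construct a Fredholm module (equivalently, a cyclic cocycle) on the fixed-point algebra and evaluate its pairing with the idempotent directly, for all $0<q\le 1$ at once, getting a nonzero ($q$-independent) integer, while the pairing with $[1]$ vanishes. So your parenthetical fallback ``a direct computation valid for all $0<q\le 1$'' is the route that works: either cite that computation or carry it out; as written, the crucial nontriviality over $\mathbb{HP}^1_q$ is not established.
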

\begin{proof} 
It is explained in \cite[Section~3.2]{bu} how the non-vanishing of an index pairing computed in \cite{ind} for the fundamental
representation matrix $u$ implies that $\tau^*_{{}_{\mathbb{HP}^1_q}}$ is not stably free.

Furthermore, for $H= C(SU_q(2))$  we have a circle of characters, so that Lemma~\ref{char} yields an equivariant *-homomorphism
$C(S^{4n+3}_q)\to C(S^7_q)$. This allows us to apply Theorem~\ref{cor} to conclude that 
\begin{align}
&[\tau^*_{{}_{\mathbb{HP}^n_q}}]=2[1]\in K_0(C(\mathbb{HP}^n_q))\;\Rightarrow\;
[\tau^*_{{}_{\mathbb{HP}^1_q}}]=2[1]\in K_0(C(\mathbb{HP}^1_q)).
\end{align}

As this contradicts the stable non-triviality of  $\tau^*_{{}_{\mathbb{HP}^1_q}}$, we infer that the  left
$C(\mathbb{HP}^n_q)$-module $\tau^*_{{}_{\mathbb{HP}^n_q}}$ 
is \emph{not} stably free. Finally, the fact that \eqref{equivalence} induces an isomorphism between $\tau^*_{{}_{\mathbb{HP}^n_q}}$ and  
$\tau_{{}_{\mathbb{HP}^n_q}}$
completes the proof.
\end{proof}

Now it follows from \cite[Proposition~3.2]{bu} that the noncommutative Borsuk-Ulam type~2 conjecture \cite{bu} holds for $C(S^{4n+3}_q)$:
\begin{corollary}
For any $n\in\mathbb{N}\setminus\{0\}$, there does \emph{not} exist
	a $(C(SU_q(2)),\Delta)$-equivariant *-homomorphism $C(SU_q(2))\to C(S^{4n+3}_q)\!\circledast^{\delta_{C(S^{4n+3}_q)}}\! C(SU_q(2))$.
\end{corollary}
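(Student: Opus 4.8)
The plan is to obtain the Corollary by contradiction, reducing the non-existence of the asserted equivariant *-homomorphism to the stable non-triviality proved in Theorem~\ref{2.5}; this reduction is exactly the mechanism behind \cite[Proposition~3.2]{bu}. So I would begin by assuming that there is a $(C(SU_q(2)),\Delta)$-equivariant *-homomorphism
$$
\phi\colon C(SU_q(2))\longrightarrow C(S^{4n+3}_q)\!\circledast^{\delta_{C(S^{4n+3}_q)}}\! C(SU_q(2)),
$$
and then identify the target. By the definition of the iterated equivariant join, $C(S^{4n+3}_q)\circledast C(SU_q(2))$ is the $(n+1)$-times iterated join $C(S^{4(n+1)+3}_q)=C(S^{4n+7}_q)$, which by Theorem~\ref{2} carries a free $(C(SU_q(2)),\Delta)$-action with fixed-point subalgebra $C(\mathbb{HP}^{n+1}_q)$, and for which the module associated through the fundamental representation $V$ is precisely $\tau^*_{{}_{\mathbb{HP}^{n+1}_q}}$.

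Next I would analyse the source. The right coaction of $(C(SU_q(2)),\Delta)$ on itself by comultiplication is free, and its fixed-point subalgebra is $\mathbb{C}$, since $\Delta(b)=b\otimes 1$ combined with the counit axiom forces $b=\varepsilon(b)1$. As $\mathcal{P}_{C(SU_q(2))}(C(SU_q(2)))=\mathcal{O}(SU_q(2))$ coacts on itself by its own comultiplication, the counit isomorphism $\mathcal{O}(SU_q(2))\Box^{\mathcal{O}(SU_q(2))}V\cong V$ shows that the associated module over $\mathbb{C}$ is just $V\cong\mathbb{C}^2$, a free module of rank $2$, whose class in $K_0(\mathbb{C})=\mathbb{Z}$ equals $2[1]$. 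I would then apply Theorem~\ref{cor} to $\phi$ and $V$, using that $\phi$ is unital, so that its restriction-corestriction $\mathbb{C}\to C(\mathbb{HP}^{n+1}_q)$ carries $[1]$ to $[1]$; this yields
$$
[\tau^*_{{}_{\mathbb{HP}^{n+1}_q}}]=\phi_*\big([\mathcal{O}(SU_q(2))\Box^{\mathcal{O}(SU_q(2))}V]\big)=\phi_*\big(2[1]\big)=2[1]\in K_0(C(\mathbb{HP}^{n+1}_q)).
$$
This says precisely that $\tau^*_{{}_{\mathbb{HP}^{n+1}_q}}$ is stably free, contradicting Theorem~\ref{2.5} applied with $n+1\geq 2$. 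Hence no such $\phi$ can exist.

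Since the substantial work has already been carried out in Theorem~\ref{2.5}, there is no serious technical obstacle remaining; the only genuinely delicate point is the bookkeeping of the reduction. The conceptual crux is the observation that the base of the self-coaction of the structure algebra $C(SU_q(2))$ is a single point, so that any equivariant map out of $C(SU_q(2))$ necessarily pulls the tautological module back from that point and thereby trivialises it over the base of the target join. Everything else---freeness of the induced action (Theorem~\ref{2}), the identification of the iterated join with $C(S^{4n+7}_q)$, the counit isomorphism for the cotensor product, and the unitality of $\phi$---is routine, and the resulting statement is exactly the assertion of \cite[Proposition~3.2]{bu} that the noncommutative Borsuk-Ulam type~2 conjecture holds for $C(S^{4n+3}_q)$.
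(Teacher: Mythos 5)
Your proof is correct and takes essentially the same route as the paper: both reduce the corollary to the stable non-triviality of the tautological module established in Theorem~\ref{2.5}. The only difference is packaging --- the paper outsources the implication ``equivariant *-homomorphism implies stable freeness of the associated module'' to \cite[Proposition~3.2]{bu}, whereas you re-derive it from the paper's own Theorem~\ref{cor}, applied to $\phi$ with source $C(SU_q(2))$ coacting on itself by $\Delta$, whose fixed-point algebra is $\mathbb{C}$ and whose associated module $\mathcal{O}(SU_q(2))\Box^{\mathcal{O}(SU_q(2))}V\cong V$ is free of rank two, so that $[\tau^*_{{}_{\mathbb{HP}^{n+1}_q}}]=\phi_*(2[1])=2[1]$ in $K_0(C(\mathbb{HP}^{n+1}_q))$, contradicting Theorem~\ref{2.5} at $n+1$; this makes your argument self-contained within the paper's framework.
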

\noindent
In other words, there does \emph{not} exist
	an $SU_q(2)$-equivariant continuous map from $S^{4n+7}_q$ to~$S^{3}_q$.

\section*{Acknowledgments} 
This work was partially supported by  NCN grant 
2012/06/M/ST1/00169. It is a pleasure to thank Kenny De Commer for pointing to us~\eqref{equivalence}.


\end{document}